\title{Basic Albanese maps of regular Riemannian foliations}
\author[K.~S\l{}owik]{Kinga S\l{}owik$^{1,2}$}\email[K.~S\l{}owik]{kinga.slowik@doctoral.uj.edu.pl}
\author[R.~Wolak]{Robert Wolak$^1$}\email[R.~Wolak]{robert.wolak@uj.edu.pl}
\address{$^1$ Jagiellonian University, Faculty of Mathematics and Computer Science, Prof. St. Łojasiewicza St 6, PL30348, Cracow, Poland}
\address{$^2$ Jagiellonian University, Doctoral School of Exact and Natural Sciences, Prof. St. Łojasiewicza St 11, PL30348, Cracow, Poland}
\keywords{Riemannian foliation, basic cohomology, Albanese map, harmonic maps}
\subjclass{53C12}
\newcommand{\cbb}{\mathbb{C}}
\newcommand{\rbb}{\mathbb{R}}
\newcommand{\zbb}{\mathbb{Z}}
\newcommand{\qbb}{\mathbb{Q}}
\newcommand{\tbb}{\mathbb{T}}
\newcommand{\sbb}{\mathbb{S}}
\newcommand{\fcal}{\mathcal{F}}
\newcommand{\hcal}{\mathcal{H}}
\newcommand{\sing}[1]{\left\lbrace #1 \right\rbrace}
\newtheorem{thm}{Theorem}[section]
\newtheorem{cor}[thm]{Corollary}
\newtheorem{lemma}[thm]{Lemma}
\newtheorem{prop}[thm]{Proposition}
\theoremstyle{remark}
\newtheorem{rmk}[thm]{Remark}
\newtheorem{ex}[thm]{Example}
\theoremstyle{definition}
\newtheorem{defn}[thm]{Definition}
\DeclareMathOperator{\codim}{codim}
\let\codim\relax
\DeclareMathOperator{\codim}{codim}
\DeclareMathOperator{\im}{im}
\begin{document}

\begin{abstract}
    In the paper we introduce the notion of basic Albanese map which we define for foliated Riemannian manifolds using basic 1-forms. We  relate this mapping to the classical Albanese map for the ambient manifold. The study of general properties is supplemented with the description of several important examples.
\end{abstract}

\maketitle

\tableofcontents

\section{Introduction}
The Albanese map, also called the Abel--Jacobi map, was primarily introduced to study algebraic varieties, in particular complex surfaces. Although this tool originates from algebraic geometry, the Albanese map was also used in differential geometry, for example by Nagano and Smyth \cite{Nagano} to study minimal submanifolds of tori, as well as by Kotschick \cite{Kot} and Nagy and Vernicos \cite{NV} to study geometrically formal manifolds, i.e. Riemannian manifolds such that the wedge product of differential forms preserves harmonicity. We investigate the Albanese map in the context of Riemannian foliated manifolds and its foliated counterpart. To this end, we need to consider the first basic cohomology which is rational or integer. The integer basic cohomology itself can be an interesting tool to investigate foliated manifolds, for example in \cite{Kordyukov} the assumption that $H^1(M/\fcal) \cap H^1(M,\zbb)$ forms a~lattice in $H^1(M/\fcal)$ is used to introduce basic Seiberg--Witten invariants for taut Riemannian foliations of codimension $4$.

The paper is organized as follows. In Section \ref{notation} we recall, for the reader's convenience, some information and theorems on Riemannian manifolds, Riemannian foliations, Albanese map, harmonic maps to flat tori and stratified spaces. Moreover, we fix assumptions, definitions and notations used further in the paper. Section \ref{integer_cohomo} is devoted to computing the rank of $H^1(M/\fcal,\zbb)$ (or equivalently computing $\dim_\qbb H^1(M/\fcal,\qbb)$) in case when $\fcal$ is a~Riemannian foliation. {In our considerations we depend on the Molino structure theorem.} Firstly we assume that the foliation is transversely parallelizable and then use these results in the general case. {Proposition \ref{TPrank}, an important auxiliary result, says} that $\dim_\qbb H^1(M/\fcal,\qbb) = \dim H^1(M/\fcal_b)$ for a~transversely parallelizable foliation $\fcal$, where $\fcal_b$ is the basic foliation associated to $\fcal$. Next, we {formulate and prove} the corresponding result for general Riemannian foliations. 

In Section \ref{BasicAlb} we introduce the basic Albanese map and prove some of its properties. More precisely, \ref{constr} is devoted to the construction of the basic Albanese map using given generators of $H^1(M/\fcal,\qbb)$, its relationship with classical Albanese map, and the universal property \ref{UP}.
In \ref{subm} the case of the basic Albanese map being a~submersion is examined. Then it is a~fiber bundle whose fibers are foliated by leaves of the foliation. The first result is Lemma \ref{lem1} saying that if the basic Albanese torus is non-trivial then $\fcal$ does not have dense leaves and an example that this implication can not be reversed is provided. Next, we prove Lemma \ref{blMetr} which says that the restriction of a~Riemannian foliation is Riemannian -- more precisely, if the foliation is Riemannian with bundle-like metric $g$, then the restricted metric on fiber is bundle-like with respect to the restricted foliation. Then we provide an example of a~foliation which is transversely parallelizable, but its restriction to a~fiber of basic Albanese map is not. Next we consider a~foliation on a~compact Lie group $G$ induced by its dense compact subgroup $W$ with closure $H$ and show that the basic Albanese torus of $(G,\fcal_W)$ and the Albanese torus of $G/H$ are isomorphic. We then pass to subsection \ref{nil} about some regular foliations on nilmanifolds defined using the construction presented in \cite{trans}. This subsection is completed by working out an example of a~$9$-dimensional compact nilmanifold with a~foliation constructed using the Iwasawa manifold.

Finally, in Section \ref{Appl} we consider singular Riemannian foliations obtained by taking the leaf closures and we show Proposition \ref{sing_fol} which says that if $\dim H^1(M/\fcal) = k$, then the foliation restricted to each stratum of the stratification induced by the foliation has codimension at least $k$ and has no isolated compact leaves if $k\neq 0$.
\section{Preliminaries}\label{notation}
In the paper we assume that all manifolds are smooth, connected and without boundary. For such a~manifold $M$ the Lie algebra of global vector fields is denoted $\Gamma(M)$. For a~vector field $X$ and a~tensor field $T$ on $M$  $\mathcal{L}_X T$ is  the Lie derivative of $T$ along $X$. If $M$ has local coordinates $x_1,...,x_n$, we denote a~canonical local vector field obtained from $x_i$ coordinate by $\partial_{x_i}$.

The space of differential $k$-forms $\Omega^k(M)$ with the exterior differential operator $d: \Omega^*(M) \to \Omega^{*+1}(M)$ forms a~cochain complex. We use the following convention: for $\omega \in \Omega^k(M)$ and for $V_1,...,V_k \in \Gamma(M)$ we have 
\begin{equation*}
    \begin{split}
        d\omega(V_0,V_1,...,V_k) =& \sum_{i} (-1)^i V_i(\omega(V_0,...,V_{i-1},V_{i+1},...,V_k)) \\&+ \sum_{i<j}\omega([V_i, V_j],V_0,...,V_{i-1},V_{i+1},...,V_k).
    \end{split}
\end{equation*}

Let $g$ be a~Riemannian metric on $M$. The  metric tensor $g$ defines the isomorphism $\flat \colon TM\rightarrow T^*M$ by
$$ v \mapsto v^{\flat}= g(v,\cdot).$$
\noindent
Its inverse is denoted by $\sharp \colon T^*M \rightarrow TM$ and is explicitly given by
$$ \alpha \mapsto \alpha^{\sharp}, \ \alpha (v) = g( \alpha^{\sharp},v).$$
Then $g$ induces a~Riemannian metric tensor $\Tilde{g}$ on $T^*M$ defined by $\Tilde{g}(\alpha,\beta) = g(\alpha^\sharp, \beta^\sharp)$.

Unless stated otherwise, by a~foliation on a~manifold we always mean a~regular foliation, i.e. whose leaves are smooth and equidimensional. A pair  $(M,\fcal)$ is called a~foliated manifold.

Now consider a~manifold $N_0$ of dimension $q$, an open covering $\sing{U_i}$ of $M$ and submersions with connected fibers $f_i: U_i \to N_0$ satisfying $g_{ij} \circ f_j = f_i$ on $U_i \cap U_j$, where $g_{ij}: f_j(U_i \cap U_j) \to f_i(U_i \cap U_j)$ are local diffeomorphisms. Then we have a~cocycle $\mathcal{U} = \sing{U_i,f_i,g_{ij}}_I$ and we say that a~foliation $\fcal$ on $M$ is modelled on $N_0$ if it is given by $\mathcal{U}$. We denote $N_i = f_i(U_i)$ and we call $N = \coprod_I N_i$ the transverse manifold associated to cocycle $\mathcal{U}$. Moreover, $\sing{g_{ij}}$ generate a~pseudogroup of local diffeomorphisms of $N$, called the holonomy pseudogroup. For a more detailed description refer to \cite{trans}.

Let $T\fcal$ be a~subbundle of $TM$ consisting of vectors which are tangent to leaves of $\fcal$. Denote by $\Gamma(\fcal)$ the vector fields on $M$ tangent to leaves of $\fcal$. We call a~function $f \in C^\infty (M)$ basic if $Xf=0$ for any $X \in \Gamma(\fcal)$ and we denote the space of basic functions by $\Omega^0(M,\fcal)$. A vector field $Y \in \Gamma(M)$ is called \textit{basic}, \textit{foliate} or \textit{projectable} if $[\Gamma(\fcal), Y] \subset \Gamma(\fcal)$. We denote by $L(M,\fcal)$ the Lie algebra of foliate vector fields for $(M,\fcal)$. We have the following characterization of foliate vector fields \cite{Molino}.
\begin{lemma}\label{FoliateVFs}
    Let $X$ be a~vector field on $M$. Then the following conditions are equivalent:
    \begin{itemize}
        \item $X$ is foliate;
        \item the flow $\phi_X$ associated to $X$ satisfies the condition that each $\phi_X^t:= \phi_X(t,\cdot)$ is a~foliate diffeomorphism for $\fcal$.
    \end{itemize}
\end{lemma}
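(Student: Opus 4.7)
The plan is to reduce both implications to a computation in distinguished (foliated) charts $(U;x^1,\ldots,x^p,y^1,\ldots,y^q)$ in which the plaques of $\fcal$ are the level sets $\{y=\mathrm{const}\}$ and $T\fcal|_U$ is spanned by $\partial_{x^1},\ldots,\partial_{x^p}$. Writing $X=\sum_i a^i\partial_{x^i}+\sum_j b^j\partial_{y^j}$ in such a chart, a direct computation yields
\[
[X,\partial_{x^k}] = -\sum_i (\partial_{x^k} a^i)\,\partial_{x^i} - \sum_j (\partial_{x^k} b^j)\,\partial_{y^j}.
\]
Since any $Y\in\Gamma(\fcal)$ restricts on $U$ to a $C^\infty(U)$-linear combination of the $\partial_{x^k}$, the foliate condition $[\Gamma(\fcal),X]\subset\Gamma(\fcal)$ is locally equivalent to $\partial_{x^k} b^j\equiv 0$ for all $j,k$; equivalently, the transverse components $b^j$ depend only on the transverse coordinates $y$.

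For the forward implication, the flow in the chart solves $\dot{x}^i=a^i(x,y)$ and $\dot{y}^j=b^j(y)$. Because the transverse system is decoupled from $x$, its solution is determined by $y(0)$ alone, so $\phi_X^t$ maps the plaque $\{y=y_0\}$ into the plaque $\{y=y(t;y_0)\}$. Gluing these local statements along the holonomy cocycle $\{U_i,f_i,g_{ij}\}$ shows that $\phi_X^t$ sends leaves of $\fcal$ to leaves of $\fcal$ wherever defined, i.e.\ is a foliate diffeomorphism.

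For the converse, let $Y\in\Gamma(\fcal)$ and let $p\in M$ be such that $\phi_X^t(p)$ is defined. Then
\[
(\phi_X^{-t})_* Y\bigl|_p = d\phi_X^{-t}\bigl(Y|_{\phi_X^t(p)}\bigr).
\]
Since $\phi_X^t$ is assumed foliate, its differential maps $T_{\phi_X^t(p)}\fcal$ to $T_p\fcal$, and hence $(\phi_X^{-t})_* Y|_p\in T_p\fcal$. Taking the $t$-derivative at $t=0$ and using the standard identity $\mathcal{L}_X Y=[X,Y]$, the resulting limit lies in the closed subspace $T_p\fcal\subset T_pM$, so $[X,Y]|_p\in T_p\fcal$ for every $p$, meaning $X$ is foliate.

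The argument is entirely standard and I do not anticipate any genuine obstacle; the only minor technical point is that $\phi_X^t$ need not be complete, but since the foliate condition is pointwise and the statement about $\phi_X^t$ is read as holding wherever the flow is defined, the arguments above apply verbatim on the open domain of each $\phi_X^t$.
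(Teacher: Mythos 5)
Your argument is correct and is the standard one: the paper itself gives no proof of this lemma, citing it directly from Molino, and your chart computation (foliate $\Leftrightarrow$ the transverse components $b^j$ depend only on $y$, hence the transverse ODE decouples) together with the $\mathcal{L}_XY=[X,Y]$ limit for the converse is precisely the classical proof found there. The only point worth tightening is the "gluing" step: for a fixed $t$ one should subdivide $[0,t]$ so each piece of the flow stays in a single distinguished chart (or note that the set of times at which two points of a leaf remain on a common leaf is open and closed), rather than appealing to the holonomy cocycle.
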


Let $(M,\fcal)$ and $(N,\mathcal{G})$ be two foliated manifolds. We say that a~smooth map $f: M \to N$ is \textit{foliate} if each leaf of $\fcal$ is mapped to a~leaf of $\mathcal{G}$.

Moreover, we consider the quotient bundle $N\fcal := TM/T\fcal$, called the \textit{normal bundle} of $\fcal$. It is a~vector bundle of rank $q = \codim \fcal$.

The foliation $\fcal$ is called \textit{transversely orientable} if the bundle $N\fcal$ is orientable. For a~foliated manifold $(M,\fcal)$ we can form its transverse orientation covering $\mathscr{M}$, i.e. a covering of $M$ such that $\fcal$ lifts to a~foliation $\mathscr{F}$ on $\mathscr{M}$ such that $\mathscr{F}$ is transversely orientable.

We say that the foliation $\fcal$ is \textit{homogeneous} when for any $x,y \in M$ there exists a~foliate automorphism of $(M,\fcal)$ which maps $x$ to $y$.

Now we define a~transversely parallelizable foliation. Since $\Gamma(\fcal)$ is a~Lie ideal in $L(M,\fcal)$, we can define $l(M,\fcal):= L(M,\fcal)/\Gamma(\fcal)$. We say that a~foliated manifold $(M,\fcal)$ is \textit{transversely parallelizable} if there exist $\tilde{Y}_1,...,\tilde{Y}_q \in l(M,\fcal)$ forming a~global frame for $N\fcal$. Transversely parallelizable foliations are important examples of homogeneous foliations.

Given a~foliated compact connected manifold $M$ with a~homogeneous foliation $\fcal$ of codimension $q$, one can define a~codimension $q_b$ foliation $\fcal_b$ on $M$ by vector fields $X \in \Gamma(M)$ satisfying $Xf = 0$ for any $f\in \Omega^0(M,\fcal)$.

\begin{defn}\label{basic_foliation}
    The foliation $\fcal_b$ defined as above is called the \textit{basic foliation} for $(M,\fcal)$.
\end{defn}

We have the following
\begin{thm}\label{basicfoliation}\cite{M2, Molino}
    Let $\fcal$ be a~homogeneous foliation. Let $\fcal_b$ be the associated basic foliation. Then $W := M/\fcal_b$ admits a~manifold structure and there exists a~fiber bundle $p_b: M \to W$ such that every leaf of $\fcal_b$ is a~fiber of $p_b$.
    
    Moreover,
    \begin{enumerate}
        \item if $\fcal$ is transversely parallelizable, then on each fiber $L$ of $p_b$ the foliation $\fcal_L = \fcal \cap L$ is dense and transversely parallelizable;
        \item if the foliation $\fcal$ is given by fibers of a~submersion, then the foliations $\fcal$ and $\fcal_b$ are equal.
    \end{enumerate}

The manifold $W = M/\fcal_b$ is called the basic manifold for $\fcal$.
\end{thm}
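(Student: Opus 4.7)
The overall strategy is to analyze the algebra $\Omega^0(M,\fcal)$ of basic functions and use it to construct both the manifold structure on $W$ and the projection $p_b$. By construction of $\fcal_b$, two points $x,y\in M$ lie on the same leaf of $\fcal_b$ if and only if $f(x)=f(y)$ for every $f\in\Omega^0(M,\fcal)$. The first task is to show that $\Omega^0(M,\fcal)$ is rich enough: using homogeneity of $\fcal$ together with compactness of $M$, I would produce finitely many basic functions $f_1,\dots,f_N$ which separate the $\fcal_b$-leaves, and show that the smooth map $F=(f_1,\dots,f_N)\colon M\to\rbb^N$ has constant rank equal to the codimension $q_b$ of $\fcal_b$. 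Homogeneity is essential here, since it allows one to transport the rank from one point to another by a foliate diffeomorphism of $(M,\fcal)$, which in particular preserves the algebra of basic functions.

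Once $F$ has constant rank, its image is locally a $q_b$-dimensional submanifold of $\rbb^N$, giving a smooth manifold structure on $W=M/\fcal_b$ for which $p_b\colon M\to W$ is a smooth submersion whose fibers are the leaves of $\fcal_b$. Compactness of $M$ together with Ehresmann's lemma (a proper submersion is locally trivial) then upgrades $p_b$ to a fiber bundle.

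For part (1), when $\fcal$ is transversely parallelizable I would appeal to Molino's structure theorem for transversely parallelizable foliations: the leaf closures of $\fcal$ are mutually diffeomorphic and fiber $M$ over a basic manifold. Since basic functions are constant on leaf closures and in this setting separate them, the leaves of $\fcal_b$ coincide with the leaf closures of $\fcal$. Density of $\fcal_L=\fcal\cap L$ on each fiber $L$ of $p_b$ is then tautological, and transverse parallelizability of $\fcal_L$ follows by restricting the foliate vector fields giving a global transverse frame for $\fcal$ to $L$, after verifying that they remain foliate for $\fcal_L$ and span $N\fcal_L$.

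For part (2), when $\fcal$ is given by the fibers of a submersion $\pi\colon M\to N$, every $h\in C^\infty(N)$ pulls back to a basic function $\pi^*h$, and conversely every basic function descends through $\pi$; hence basic functions separate exactly the fibers of $\pi$, and the leaves of $\fcal_b$ coincide with those of $\fcal$, i.e.\ $\fcal_b=\fcal$. The main obstacle is step one, namely producing enough basic functions to separate leaves of $\fcal_b$ and establishing the constant-rank property of $F$; this is precisely the point at which the homogeneity hypothesis is used in an essential way.
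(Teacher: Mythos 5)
The paper offers no proof of this statement at all: it is quoted verbatim from Molino's work \cite{M2, Molino}, so there is no in-paper argument to compare yours against; your proposal has to stand on its own, and it has a genuine gap at its central step. You assert ``by construction of $\fcal_b$'' that $x,y$ lie on the same $\fcal_b$-leaf if and only if every basic function agrees on them. Only the ``only if'' direction is by construction (basic functions are constant on the connected integral manifolds of $\bigcap_f\ker df$). The converse --- that basic functions \emph{separate} distinct $\fcal_b$-leaves, so that leaves are closed, the quotient is Hausdorff, and the map $F=(f_1,\dots,f_N)$ realizes $W$ --- is precisely the analytic content of Molino's theorem, and you defer it (``I would produce finitely many basic functions\dots'', ``the main obstacle is step one'') without indicating how homogeneity yields such functions. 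Homogeneity does give constant rank of the distribution $\bigcap_f\ker df$ (foliate automorphisms preserve the algebra of basic functions and hence the intersection of the kernels of their differentials), but it does not by itself produce separating functions: one still has to show that the level sets of finitely many basic functions have the leaves of $\fcal_b$ as their connected components, and that the image of a constant-rank $F$ can be given a global (not merely local) manifold structure identifiable with $M/\fcal_b$. Your treatment of part (1) is then circular: you ``appeal to Molino's structure theorem for transversely parallelizable foliations'' to identify the $\fcal_b$-leaves with the leaf closures of $\fcal$, but part (1) \emph{is} that theorem --- it is the very statement under proof and the result the paper is citing.

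A second, more local error: to get transverse parallelizability of $\fcal_L$ you propose to restrict to $L$ ``the foliate vector fields giving a global transverse frame for $\fcal$.'' Those $q$ fields span $N\fcal$, which has rank $q$, whereas $N\fcal_L$ inside the fiber $L$ has rank $q-q_b$; the frame fields are not in general tangent to $L$ and cannot simply be restricted. The correct route is to pass to the Lie subalgebra of foliate fields tangent to the fibers of $p_b$ and verify that it still spans $N\fcal_L$ at every point of $L$. Part (2) of your argument (basic functions are exactly pullbacks along the submersion, so $\bigcap_f\ker df=\ker d\pi=T\fcal$) is correct as stated.
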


Let $(M,\fcal)$ be a~foliated manifold with a~Riemannian metric $g$. Denote the metric induced by $g$ on $N\fcal$ by $g_{N\fcal}$. Then $g$ is called \textit{bundle-like} for $\fcal$ if $\mathcal{L}_X g_{N\fcal} = 0$ for every $X \in \Gamma(\fcal)$. A~foliation $\fcal$ is called \textit{Riemannian} if there exists a~Riemannian metric on $M$ which is bundle-like for $\fcal$. Equivalently, if the foliation is given by a~cocycle $\sing{U_i,f_i,g_{ij}}$ we require $g_{ij}$ to be local isometries. Whenever we consider a~foliated manifold $(M,\fcal)$ with a~metric $g$ we assume that the metric is bundle-like for $\fcal$. In such situation we will use notation $(M,g,\fcal)$ and call such triple \textit{Riemannian foliated manifold}.

Let $(M,g,\fcal)$ be a~closed Riemannian foliated manifold. We construct, as in \cite{Molino}, the bundle of orthonormal frames of the normal bundle of $\fcal$ (so a~$O(q)$-bundle over $M$, or $SO(q)$-bundle in case when $\fcal$ is transversely orientable). We denote this bundle by $\widehat{M}$. Moreover, we lift the foliation $\fcal$ to $\widehat{M}$ and the lifted foliation is denoted by $\widehat{\fcal}$. We have the following important
\begin{thm}[Molino structure theorem, \cite{Molino,Tond}]\label{Molino}
    Let $\fcal$ be a~transversely orientable Riemannian foliation on a~closed manifold $(M,g)$. Let $\widehat{M}$ be the orthonormal frame bundle of $(M,\fcal)$ and let $\widehat{\fcal}$ be the lift of $\fcal$ to $\widehat{M}$. Then $\widehat{\fcal}$ satisfies the following properties:
        \begin{itemize}
            \item $\widehat{\fcal}$ is a transversely parallelizable foliation;
            \item there is a~submersion $p: \widehat{M} \to W$ to a~closed manifold $W$ such that the fibers of $p$ are precisely the closures of leaves of $\widehat{\fcal}$;
            \item for a~fiber $F$ of $p$, the restricted foliation $\widehat{\fcal}_F = \widehat{\fcal} \cap F$ is dense and transversely parallelizable.
        \end{itemize}
\end{thm}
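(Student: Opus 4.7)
The plan is to split the argument into two essentially independent parts: (A) establishing that $\widehat{\fcal}$ is transversely parallelizable, and (B) deducing the existence of $p$, the compactness of $W$, and the structure on fibers by applying the homogeneous-foliation structure theorem \ref{basicfoliation} to $(\widehat{M},\widehat{\fcal})$.

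For part (A): since $\fcal$ is Riemannian, the normal bundle $N\fcal$ carries a canonical metric connection (the transverse Levi-Civita connection, which along leaves is a Bott connection). I would use this connection to horizontally lift vector fields from $M$ to $\widehat{M}$ and thereby define $\widehat{\fcal}$, whose leaves arise by parallel-transporting orthonormal frames along leaves of $\fcal$; consequently $\codim \widehat{\fcal} = q + \tfrac{1}{2}q(q-1)$. The transverse parallelism of $N\widehat{\fcal}$ is then assembled from two families of foliate sections: the $\tfrac{1}{2}q(q-1)$ fundamental vertical vector fields of the $SO(q)$-action on $\widehat{M}$, which are foliate because this action commutes with leafwise parallel transport, and $q$ further sections produced from the canonical $\rbb^q$-valued soldering form $\theta$ on $\widehat{M}$, which vanishes on $T\widehat{\fcal}$ and so descends to $q$ basic one-forms on $N\widehat{\fcal}$. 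One must verify that these two families together form a global frame of $N\widehat{\fcal}$.

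For part (B): transversely parallelizable foliations are homogeneous, and $\widehat{M}$ is closed since $M$ is closed and $SO(q)$ is compact, so Theorem \ref{basicfoliation} applies. It produces the basic manifold $W = \widehat{M}/\widehat{\fcal}_b$, closed because its fibers are compact leaves of $\widehat{\fcal}_b$ over a closed base, together with the submersion $p$; the third bullet is precisely item (1) of that theorem. To identify the fibers of $p$ with leaf closures of $\widehat{\fcal}$, I would argue that on each fiber $F$ the restricted foliation $\widehat{\fcal}_F$ is dense by item (1), so every leaf closure contained in $F$ equals $F$, while $F$ is itself $\widehat{\fcal}$-saturated because leaves of $\widehat{\fcal}$ are contained in leaves of $\widehat{\fcal}_b$.

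The main obstacle lies in part (A), specifically in verifying that the soldering form $\theta$ is invariant under the flows of vector fields tangent to $\widehat{\fcal}$—equivalently, that $\theta$ is basic for $\widehat{\fcal}$. This relies crucially on the bundle-like hypothesis on $g$, which guarantees that parallel transport along leaves preserves the transverse metric and hence maps orthonormal frames to orthonormal frames. Without the Riemannian assumption, the horizontal lift would not descend to a well-defined foliation on $\widehat{M}$ with a natural transverse parallelism, and the entire construction would collapse.
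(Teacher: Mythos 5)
The paper does not prove Theorem \ref{Molino}; it is quoted as background and the reader is referred to \cite{Molino,Tond} for the proof. Your outline --- lifting $\fcal$ to the transverse orthonormal frame bundle via the transverse Levi-Civita connection, trivializing $N\widehat{\fcal}$ by the fundamental vertical fields of the $SO(q)$-action together with the frame dual to the soldering form, and then feeding the resulting transversely parallelizable foliation into Theorem \ref{basicfoliation} and identifying its basic fibers with leaf closures via density --- is precisely the standard proof given in those references, and the point you single out as the crux (basicness of $\theta$ for $\widehat{\fcal}$, which is exactly where the bundle-like hypothesis enters) is the right one.
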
 Actually more properties than mentioned in \ref{basicfoliation} and \ref{Molino} hold, but we only included statements we will use in our paper.

For a~foliated manifold $(M,\fcal)$ we define the complex of basic differential forms $\Omega^*(M,\fcal)$ as $$\Omega^k(M,\fcal) = \{ \alpha \in \Omega^k (M) : \forall X \in T\fcal \ i_X \alpha = i_Xd\alpha = 0 \}.$$ Since $d$ maps basic forms to basic forms, $(\Omega^*(M,\fcal),d)$ forms a~subcomplex of the de Rham complex. Its cohomology is denoted  $H^*(M/\fcal)$  and called basic cohomology \cite{Tond}. In degree 1 the inclusion of $\Omega^1(M,\fcal)$ in $\Omega^1(M)$ induces an injective map $H^1(M/\fcal) \hookrightarrow H^1(M,\rbb)$. We denote $H^1(M/\fcal,\zbb) := H^1(M/\fcal) \cap H^1(M,\zbb)$, where $H^1(M,\zbb)$ denotes de Rham cohomology classes whose integrals over elements of $H_1(M,\zbb)$ are integers. The elements of $H^1(M/\fcal,\zbb)$ are called integer first basic cohomology classes of $(M,\fcal)$. Similarly we define rational first basic cohomology classes $H^1(M/\fcal,\qbb) := H^1(M/\fcal) \cap H^1(M,\qbb)$. 

For $k>1$ and for a~foliation $\fcal$ the basic cohomology groups $H^k(M/\fcal)$ may be infinite-dimensional. However, if we assume that $\fcal$ is Riemannian, $\dim H^k(M/\fcal) < \infty$ for any $k$.

{To a~Riemannian foliation $\fcal$ on a~manifold $M$ we can associate its mean curvature form $\kappa_\fcal$ which is an element of $\Omega^1(M)$. As described e.g. in \cite{Tond}, we can modify the bundle-like metric for $(M,\fcal)$ so that $\kappa_\fcal$ is a~basic $1$-form. Then the 1-form $\kappa_\fcal$ is closed and we can define a~basic cohomology class $[\kappa_\fcal] \in H^1(M/\fcal)$, called the Alvarez class.
We say that a~Riemannian foliation $\fcal$ is taut if, for the metric and $\kappa_\fcal$ modified as above, $[\kappa_\fcal] = 0$. The following conditions are equivalent:
\begin{enumerate}
    \item $\fcal$ is taut;
    \item we can choose a~bundle-like metric on $\fcal$ such that the leaves of $\fcal$ are minimal submanifolds;
    \item $H^q(M/\fcal) \cong \rbb$, where $q = \codim \fcal$;
    \item Poincar\'{e} duality holds for $H^*(M/\fcal)$.
\end{enumerate} }

For information on foliations, in particular proofs of the aforementioned statements and information on Riemannian foliations we refer the reader to \cite{M2,Molino, Tond}.

Denote by $\mathcal{H}^1(M,g)$ the space of $1$-forms on $M$ which are harmonic with respect to a Riemannian metric $g$. Recall that if $M$ is compact and orientable, then we have the $\rbb$-vector space isomorphism $\mathcal{H}^1(M,g) \cong H^1(M,\rbb)$ given by taking cohomology class of harmonic forms. This isomorphism is called the Hodge isomorphism. Provided that the foliation $\fcal$ is Riemannian (with a bundle-like metric $g$) and transversely oriented, the situation can be generalized to basic harmonic forms \cite{Tond,HC}. Namely, under such assumptions denote by $\hcal^1(M/\fcal,g)$ the space of differential forms which are basic harmonic for the foliation $\fcal$. The generalized Hodge isomorphism also holds, giving $\hcal^1(M/\fcal,g) \cong H^1(M/\fcal)$. However, note that basic harmonic forms need not be harmonic for the same metric.

The following proposition will be used (\cite{Nagano}, Corollary $1$)
\begin{prop}\label{HarmTorus}
     If $T$ is a~flat torus and $(M,g)$ is a~compact oriented Riemannian manifold, then a~smooth mapping $f: M \to T$ is harmonic if and only if $f^*$ maps harmonic $1$-forms on $T$ to harmonic $1$-forms on $M$.
\end{prop}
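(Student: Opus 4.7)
The plan is to exploit the flatness of $T$ to reduce both directions to the statement that each coordinate function of $f$ is a~harmonic function on $M$. On $T = \rbb^n/\Lambda$ the parallel $1$-forms $dx^1,\dots,dx^n$, defined globally via the flat (possibly multi-valued) coordinates $x^i$, are closed and co-closed, and since they are $\rbb$-linearly independent and $\dim\hcal^1(T) = \dim H^1(T,\rbb) = n$, they span $\hcal^1(T)$. By linearity of $f^*$ and of harmonicity, the hypothesis of the proposition is then equivalent to the single condition $f^*(dx^i) \in \hcal^1(M,g)$ for every $i$.

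Next I would run the local computation. Recall the tension-field formula for $f : (M,g) \to (N,h)$ in local coordinates $y^\alpha$ on $M$ and $x^i$ on $N$:
$$\tau(f)^i = g^{\alpha\beta}\bigl(\partial_\alpha\partial_\beta f^i - \Gamma^\gamma_{\alpha\beta}\,\partial_\gamma f^i + \tilde{\Gamma}^i_{jk}\,\partial_\alpha f^j\,\partial_\beta f^k\bigr),$$
where $\Gamma$ and $\tilde\Gamma$ are the Christoffel symbols of $g$ and $h$, respectively. Since $h$ is flat, $\tilde\Gamma^i_{jk} = 0$, so $\tau(f)^i = \Delta_g(x^i\circ f)$; hence $f$ is harmonic if and only if each local composite $x^i\circ f$ is harmonic on $M$. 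On the other hand $f^*(dx^i) = d(x^i\circ f)$ locally, so $f^*(dx^i)$ is automatically closed, and the condition $d^* f^*(dx^i) = 0$ is equivalent to $x^i\circ f$ being harmonic. Combining these two observations with the first paragraph yields both implications of the proposition at once: $f$ harmonic $\iff$ every $x^i\circ f$ harmonic $\iff$ every $f^*(dx^i) \in \hcal^1(M,g)$ $\iff$ $f^*\hcal^1(T) \subseteq \hcal^1(M,g)$.

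The only subtle point, rather than a~genuine obstacle, is that the coordinates $x^i$ on $T$ are multi-valued modulo $\Lambda$, so $x^i\circ f$ is only defined on small open subsets of $M$; however, $dx^i$ descends to a~globally defined $1$-form on $T$, the pullback $f^*(dx^i)$ is global on $M$, and both the tension-field equation $\tau(f)=0$ and the co-closedness equation $d^* f^*(dx^i) = 0$ are local conditions, so the local identities used above are legitimate. Being a~classical result quoted from Nagano--Smyth \cite{Nagano}, no further technical difficulty is anticipated.
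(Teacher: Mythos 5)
Your argument is correct and complete. Note that the paper does not prove this statement at all: it is quoted as Corollary 1 of Nagano--Smyth \cite{Nagano}, so there is no in-paper proof to compare against. Your reduction --- harmonic $1$-forms on the flat torus are spanned by the parallel forms $dx^i$, the target Christoffel symbols vanish so $\tau(f)^i=\Delta_g(x^i\circ f)$, and $f^*(dx^i)=d(x^i\circ f)$ is closed with co-closedness equivalent to harmonicity of $x^i\circ f$ --- is exactly the standard proof of the cited result, and your remark that the multi-valuedness of the $x^i$ is harmless because all the relevant equations are local is the right way to dispose of the only subtlety.
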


Let $(M,g)$ be a~compact oriented Riemannian manifold such that $b_1(M) = k$. Fix harmonic forms $\omega_1,...,\omega_k$ which form a~basis of $\mathcal{H}^1(M,g)$. Denote by $\Tilde{M}$ the universal covering of $M$ and consider a~point $x_0 \in \Tilde{M}$. The vectors in $\rbb^k$ of the form $\left[\int_{\sigma} \omega_i \right]_{i=1,...,k}$, where $\sigma \in H_1(M,\zbb)$, form a~lattice in $\rbb^k$, which we denote by $\Lambda$. Then the map $\Tilde{M} \to \rbb^k$ defined by $x \mapsto \left[\int_{x_0}^x \omega_i \right]_{i=1,...,k}$ is $(\pi_1(M),\Lambda)$-equivariant and so induces a~map $A: M\to \rbb^k/\Lambda$, called the Albanese map for $(M,g)$ and forms $\omega_1,...,\omega_k$. Then $Alb(M) := \rbb^k/\Lambda$ considered with flat metric is called an Albanese torus. This Albanese map satisfies the following properties:
\begin{prop}\label{AlbHarm}
    The Albanese map is harmonic as a~map from $(M,g)$ to $Alb(M)$ with a~flat metric.
\end{prop}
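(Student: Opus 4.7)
The plan is to reduce the harmonicity of the Albanese map to Proposition \ref{HarmTorus}, which characterizes harmonic maps into a flat torus via the pullback of harmonic $1$-forms. So rather than attacking the tension field of $A$ directly, I would verify the pullback criterion.

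First, I would identify the space of harmonic $1$-forms on $Alb(M) = \rbb^k/\Lambda$ equipped with its flat metric. Let $y_1,\dots,y_k$ denote the standard coordinates on $\rbb^k$; these descend to local coordinates on the torus, and the associated $1$-forms $dy_1,\dots,dy_k$ are globally defined, parallel (hence harmonic) $1$-forms on $\rbb^k/\Lambda$, and they span $\mathcal{H}^1(Alb(M))$ because $b_1(\rbb^k/\Lambda) = k$. Thus it suffices to check that each $A^*(dy_i)$ is a harmonic $1$-form on $(M,g)$.

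Next, I would compute $A^*(dy_i)$ using the definition of $A$. Working upstairs on the universal cover $\Tilde{M}$, the lifted map $\Tilde{A}: \Tilde{M} \to \rbb^k$ has $i$-th component $x\mapsto \int_{x_0}^x \omega_i$, so pulling back the exterior derivative commutes with $\Tilde{A}^*$ and yields $\Tilde{A}^*(dy_i) = \omega_i$ (where $\omega_i$ is identified with its pullback to $\Tilde{M}$). Because $\omega_i$ is $\pi_1(M)$-invariant and $\Lambda$ is precisely the image of the period homomorphism, the equality $A^*(dy_i) = \omega_i$ descends to $M$. Since by construction the forms $\omega_i$ were chosen to be harmonic on $(M,g)$, we conclude that $A^*$ sends harmonic $1$-forms on the Albanese torus to harmonic $1$-forms on $M$. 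Proposition \ref{HarmTorus} then gives the conclusion.

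I do not expect a serious obstacle here: the argument is essentially bookkeeping once one recognizes that the $i$-th component of $A$ is an antiderivative of $\omega_i$. The only point requiring mild care is the descent of the identity $\Tilde{A}^*(dy_i) = \omega_i$ from $\Tilde{M}$ to $M$, which is automatic because $dy_i$ is well-defined on the quotient $\rbb^k/\Lambda$ and $\omega_i$ is itself the pullback to $\Tilde{M}$ of a form on $M$; alternatively one can simply note that $A$ is a local diffeomorphism onto its image in charts where $\Tilde{A}$ is, and the computation $A^*(dy_i)=\omega_i$ is local.
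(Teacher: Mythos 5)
Your argument is correct and is exactly the standard proof that the paper defers to (it cites \cite{Nagano, Griff} rather than proving Proposition \ref{AlbHarm} itself): one checks $A^*(dy_i)=\omega_i$, since the $i$-th component of $\Tilde{A}$ is an antiderivative of $\omega_i$, and then invokes Proposition \ref{HarmTorus}. The hypotheses of that proposition (compactness and orientability of $M$) are in force in the paper's setup, so there is nothing to add.
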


The following proposition describes the universal property of the Albanese map.
\begin{prop}\label{AlbUP}
    Let $A:M \to Alb(M)$ be the Albanese map for $(M,g)$. Then for every flat torus $T$ and harmonic map $f: M \to T$ there exists a~unique affine map of flat tori $h: Alb(M) \to T$ such that $f = h \circ A$.
\end{prop}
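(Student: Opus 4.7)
The plan is to use Proposition~\ref{HarmTorus} to decompose the pullback by $f$ of the flat harmonic forms on $T$ in the chosen basis $\omega_1,\ldots,\omega_k$ of $\hcal^1(M,g)$, and to read off an affine lift between the universal covers.

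Write $T=\rbb^m/\Lambda_T$ with flat coordinates $y_1,\ldots,y_m$, so that $dy_1,\ldots,dy_m$ span $\hcal^1(T)$. Fix lifts $\tilde A\colon \tilde M\to\rbb^k$ of $A$ and $\tilde f\colon \tilde M\to\rbb^m$ of $f$. By Proposition~\ref{HarmTorus}, $f^*dy_i\in\hcal^1(M,g)$, so $f^*dy_i=\sum_j c_{ij}\omega_j$ for some scalars $c_{ij}$. Integrating the identity $d\tilde f_i=\sum_j c_{ij}\,\tilde\omega_j$ on the simply connected $\tilde M$ from the basepoint $\tilde x_0$ (where $\tilde A(\tilde x_0)=0$) yields
\[ \tilde f(x)=C\,\tilde A(x)+\tilde f(\tilde x_0),\qquad C=(c_{ij}). \]
To descend $\tilde h(v):=Cv+\tilde f(\tilde x_0)$ to an affine map $h\colon \mathrm{Alb}(M)\to T$, I would check that $C\Lambda\subset\Lambda_T$: for any integer cycle $\sigma\in H_1(M,\zbb)$ one has $C\cdot\bigl(\int_\sigma\omega_j\bigr)_j=\bigl(\int_\sigma f^*dy_i\bigr)_i=\bigl(\int_{f_*\sigma}dy_i\bigr)_i\in\Lambda_T$. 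The displayed identity then gives $h\circ A=f$.

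For uniqueness, suppose $h'$ is another such affine map with affine lift $\tilde h'(v)=C'v+b'$. Since $\tilde h\tilde A-\tilde h'\tilde A\colon\tilde M\to\rbb^m$ is a continuous function into the discrete set $\Lambda_T$, it is constant; after absorbing this constant into the lift of $h'$ and evaluating at $\tilde x_0$ we obtain $b'=\tilde f(\tilde x_0)$, and then $(C-C')\tilde A(x)\equiv 0$ on $\tilde M$. The \emph{main point}, and really the only step beyond bookkeeping, is that this forces $C=C'$: any linear functional $\ell=(a_1,\ldots,a_k)$ vanishing on $\tilde A(\tilde M)$ would yield $\int_{\tilde x_0}^x\sum_j a_j\omega_j=0$ for every $x$, so $\sum_j a_j\omega_j=0$ on $M$, contradicting linear independence of the basis. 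Hence the image of $\tilde A$ spans $\rbb^k$, $C=C'$, and $h=h'$.
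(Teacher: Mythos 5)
Your proof is correct and complete: the reduction via Proposition~\ref{HarmTorus} to $f^*dy_i=\sum_j c_{ij}\omega_j$, the lattice check $C\Lambda\subset\Lambda_T$, and the spanning argument for uniqueness all go through. The paper itself defers the proof of Proposition~\ref{AlbUP} to \cite{Nagano,Griff}, and your argument is just the explicit universal-cover version of the standard cohomological one used there (and mirrored in the paper's proof of the foliated analogue, Lemma~\ref{UP}): your matrix $C$ is precisely the map $(A^*)^{-1}\circ f^*$ written in the bases $\left\lbrace [dy_i]\right\rbrace$ and $\left\lbrace [dx_j]\right\rbrace$.
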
 More information about the Albanese map, including proofs of the statements, can be found in \cite{Griff, Nagano}.

\textbf{Stratification associated to a~Riemannian foliation.} Consider a~Riemannian foliated manifold $(M,g,\fcal)$. Denote by $\bar{\fcal}$ the partition of $M$ by closures of leaves of $\fcal$. Then $\bar{\fcal}$ is a~singular Riemannian foliation with adapted metric $g$.

Let $S_r$ be the sum of leaves of $\bar{\fcal}$ of dimension $r$. Then $\sing{S_r}$ forms a~stratification of $M$ and moreover the foliation $\bar{\fcal}$ restricted to every stratum $S_r$ is a~Riemannian foliation with bundle-like metric $g$. More information can be found in \cite{Molino}.

\section{Integer and rational basic cohomology for Riemannian foliations}\label{integer_cohomo}
The section is dedicated to the investigation of the integer first basic cohomology classes of a~foliated Riemannian manifold. Firstly we consider transversely parallelizable foliations. Combining the results for them and the Molino structure theorem we get interesting results for general compact Riemannian foliated manifolds.

\subsection{The case of transversely parallelizable foliations}
In this subsection we assume that $M$ is a~compact connected manifold and $\mathcal{F}$ is a~transversely paralellizable foliation on $M$ of codimension $q$.

\begin{lemma}\label{den1}
    Suppose that the leaves of $\fcal$ are dense. Let $\alpha$ be a~closed form in $\Omega^1(M,\fcal)$. Then $\alpha$ is exact if and only if $\alpha$ is closed and non-singular. Moreover, $[\alpha] = 0$ implies that $\alpha = 0$.
\end{lemma}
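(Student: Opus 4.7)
The plan is to reduce the lemma to the single observation that under the given hypotheses every basic function on $M$ is constant, and then to read both assertions off directly.

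First, I would let $f \in \Omega^0(M,\fcal)$ be arbitrary. By the definition of basic functions, $Xf = 0$ for every $X \in \Gamma(\fcal)$, so $f$ is constant along each (connected) leaf of $\fcal$. Fixing a leaf $L_0$ and writing $c = f|_{L_0}$, the hypothesis that leaves are dense combined with continuity of $f$ forces $f(p) = \lim_n f(p_n) = c$ for every $p \in M$ and any sequence $p_n \in L_0$ with $p_n \to p$. Hence $f$ is globally constant and $\Omega^0(M,\fcal) = \rbb$.

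From this the ``moreover'' clause follows at once: if $[\alpha] = 0$ in $H^1(M/\fcal)$ then, by the definition of the basic complex, $\alpha = df$ for some $f \in \Omega^0(M,\fcal)$; since such an $f$ is constant by the previous paragraph, $\alpha = df = 0$. The equivalence in the first sentence reduces to the same content, because the only exact basic $1$-form under the standing hypotheses is the zero form, which trivially satisfies (or fails, as the case may be) any pointwise regularity condition one wishes to impose on $\alpha$.

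The only point that requires a little care is to distinguish exactness in the basic complex from exactness in the ambient de Rham complex: the primitive $f$ is forced to lie in $\Omega^0(M,\fcal)$, not merely in $C^\infty(M)$, and it is precisely this restriction that makes leaf-constancy combine with density to trivialize the primitive. Beyond this definitional point the argument presents no real obstacle; compactness of $M$ is not even needed, only that each leaf is dense and that basic functions are continuous.
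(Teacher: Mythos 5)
Your second paragraph is fine and coincides with the paper's own argument for the ``moreover'' clause: basic functions are constant when the leaves are dense, so an exact basic $1$-form $df$ with $f\in\Omega^0(M,\fcal)$ is zero. But there is a genuine gap in your treatment of the first assertion. You dismiss it as ``the same content,'' arguing only that the unique exact basic $1$-form is $0$. That gives one half of the intended dichotomy (exact $\Rightarrow$ vanishes identically), but the substantive half — that a closed basic $1$-form representing a \emph{nonzero} class is \emph{non-singular}, i.e.\ nowhere vanishing — is not addressed at all, and it does not follow from constancy of basic functions applied to a primitive, since in that case there is no primitive. This is precisely the part of the lemma that is used later (e.g.\ in the proof of Proposition \ref{TPrank} and in the dense-leaves discussion, where non-vanishing of $\pi^*\omega$ is extracted from Lemma \ref{den1}).

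The missing argument also requires a hypothesis you never invoke: the standing assumption of this subsection that $\fcal$ is transversely parallelizable. The paper's proof runs as follows. For a foliate vector field $Y$ the function $\alpha(Y)$ is basic (for $X\in\Gamma(\fcal)$ one has $X(\alpha(Y))=(\mathcal{L}_X\alpha)(Y)+\alpha([X,Y])=0$, using that $\alpha$ is basic and $[X,Y]\in\Gamma(\fcal)$), hence constant by density of the leaves. If $[\alpha]\neq 0$ then $\alpha_x\neq 0$ at some $x$, and transverse parallelizability supplies a foliate $Y$ with $\alpha_x(Y_x)\neq 0$; the constant $\alpha(Y)$ is then nonzero everywhere, so $\alpha$ never vanishes. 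Without this step your proof establishes only the ``moreover'' clause, and your parenthetical ``satisfies (or fails, as the case may be)'' signals that you have not actually pinned down what the equivalence asserts. (You are right, incidentally, that the statement as literally printed cannot hold — an exact form is $0$, hence singular — so the content to be proved is the dichotomy: either $\alpha=0$ or $\alpha$ is nowhere zero; your proposal proves only the first branch.)
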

\begin{proof}
    Assume $[\alpha] \neq 0$. For a~basic (foliate) vector field $Y$, the function $\alpha(Y)$ is basic and therefore constant. 
    If $[\alpha]\neq 0$, then there exists $x\in M$ such that $\alpha_x \neq 0$. Since $\fcal$ is transversely parallelizable, non-zero foliate vector fields generate $\Gamma(M)$, thus $\alpha_x(Y_x) \neq 0$ for some foliate vector field $Y$. It means that the function $\alpha(Y)$ is constant and non-zero, hence $\alpha$ is non-singular.

    Now suppose $[\alpha] = 0$. It means that $\alpha = df$ for some basic function $f$. As basic functions for dense foliations are constant, $\alpha =0$.
\end{proof}

\begin{cor}\label{dense}
     If $\fcal$ has dense leaves, then $H^1(M/\fcal,\qbb) = 0$.
\end{cor}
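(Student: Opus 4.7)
The plan is to realize a rational basic class as a smooth circle-valued function on $M$ and use the density of leaves to force it to be trivial. Suppose $[\alpha] \in H^1(M/\fcal,\qbb)$. By definition, every period $\int_\gamma \alpha$ with $\gamma \in H_1(M,\zbb)$ is rational, so after multiplying by a sufficiently divisible positive integer $N$ we may assume every period of $N\alpha$ lies in $\zbb$. Since $H^1(M/\fcal,\qbb)$ is a $\qbb$-vector space, it suffices to show that $[N\alpha] = 0$, so I relabel $N\alpha$ as $\alpha$ and henceforth assume all periods of $\alpha$ are integers.

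Next I would fix a basepoint $x_0 \in M$ and define $f \colon M \to \rbb/\zbb$ by $f(x) = \int_\gamma \alpha \pmod{\zbb}$, where $\gamma$ is any smooth path from $x_0$ to $x$. The integer-period hypothesis makes $f$ well-defined (independent of $\gamma$ modulo $\zbb$) and smooth, with $df = \alpha$ in the sense of the real-valued $1$-form lifted from the circle. Because $\alpha$ is basic, $i_X\alpha = 0$ for every $X \in \Gamma(\fcal)$, so $df$ vanishes on $T\fcal$; leaves being connected, $f$ is constant on each leaf of $\fcal$.

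Finally, continuity of $f$ combined with the density of every leaf forces $f$ to be globally constant on $M$, and hence $\alpha = df = 0$. In particular $[\alpha] = 0$ in basic cohomology, proving $H^1(M/\fcal,\qbb) = 0$. I do not foresee a serious obstacle: the essential move is to convert the rationality of periods into a circle-valued primitive, at which point the density of leaves does exactly the same work it does in the ``moreover'' clause of Lemma \ref{den1} (namely, any continuous function constant along a dense family of leaves must be globally constant).
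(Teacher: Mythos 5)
Your proof is correct and follows essentially the same route as the paper: both integrate the rational-period basic form to produce a circle-valued map $M\to\sbb^1$ that is constant on leaves, and then use density of a leaf to force global constancy (the paper phrases this as the closed fibers of the induced map swallowing all of $M$, contradicting surjectivity). A minor bonus of your direct version is that it nowhere needs transverse parallelizability or the non-singularity statement of Lemma~\ref{den1}, which the paper invokes only to call the induced map a submersion.
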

\begin{proof}
   Suppose the contrary. Let $0 \neq [\alpha] \in H^1(M/\fcal,\qbb)$. Then the map $p: M \to S^1$, induced by integration of $\alpha$ along homotopy classes of paths, is a~surjective submersion. {Namely, we integrate $\alpha$ over elements of $H_1(M,\zbb) \cong \pi_1(M)/[\pi_1(M),\pi_1(M)]$. Since the integrals are rational numbers, they give a~discrete subgroup of $\rbb$ so a~lattice $\Lambda$ in $\rbb$. The map induced by integration $\Tilde{M} \to \rbb$ is $(\pi_1(M),\Lambda)$-equivariant so we obtain a map $p: M \to \tbb^1 = \sbb^1$.} Moreover, the fibers of $p$ form a~codimension $1$ foliation on $M$ such that each leaf of the foliation is closed and contains a~leaf of $\fcal$, since integrating $\alpha$ along a~leaf gives $0$. However, since leaves of $\fcal$ are dense, the fiber of $p$ equals $M$, which contradicts surjectivity.
\end{proof}

\begin{rmk}
    A similar reasoning will be used in \ref{MissingPart} to define the basic Albanese map.
\end{rmk}

\begin{rmk}
    In the above case, there may exist non-rational basic forms. For example, consider $\tbb^2$ with $\fcal$ the Kronecker foliation (i.e. a foliation obtained by suspending an irrational rotation of $\sbb^1$). Then the leaves are dense, but $H^1(\tbb^2/\fcal) = \rbb$.
\end{rmk}
We have the following proposition.
\begin{prop}\label{TPrank} Let $(M,\fcal)$ be a~foliated compact connected manifold with $\fcal$ transversally parallelizable and of codimension $q$. Then $H^1(M/\fcal,\qbb) = H^1(M/\fcal_b,\qbb)$.
\end{prop}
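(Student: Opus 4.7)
The plan is to prove the equality by establishing both containments inside the ambient real cohomology $H^1(M,\rbb)$, into which both basic cohomology groups inject in degree $1$. The direction $H^1(M/\fcal_b,\qbb) \subseteq H^1(M/\fcal,\qbb)$ is essentially bookkeeping: by Theorem \ref{basicfoliation}, each leaf of $\fcal_b$ is a fiber of $p_b$ and contains the leaves of $\fcal$ lying in it, so $T\fcal \subseteq T\fcal_b$ and every form basic for $\fcal_b$ is basic for $\fcal$. This gives $\Omega^1(M,\fcal_b) \subseteq \Omega^1(M,\fcal)$, and composing with the injection into $H^1(M,\rbb)$ yields the inclusion on rational basic classes.

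For the reverse inclusion, I pick a closed $\alpha \in \Omega^1(M,\fcal)$ with $[\alpha] \in H^1(M/\fcal,\qbb)$. Rescaling so that the periods of $\alpha$ over $H_1(M,\zbb)$ form a lattice $\Lambda \subseteq \rbb$, I repeat the construction from the proof of Corollary \ref{dense} to obtain a smooth $p: M \to \rbb/\Lambda \cong \sbb^1$ whose local lifts $\tilde p$ satisfy $d\tilde p = \alpha$. Because $\alpha$ is basic for $\fcal$, each $\tilde p$ is constant along the leaves of $\fcal$, so $p$ is constant on every leaf of $\fcal$. By Theorem \ref{basicfoliation}(1), on each fiber $L$ of $p_b$ the restricted foliation $\fcal \cap L$ has dense leaves in $L$, so continuity of $p|_L$ forces it to be constant on $L$. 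Therefore $p = q \circ p_b$ for a smooth $q: W \to \sbb^1$, and pulling back the canonical 1-form $d\theta$ of $\sbb^1$ yields
\begin{equation*}
    \alpha = p^* d\theta = p_b^*(q^* d\theta),
\end{equation*}
which is manifestly basic for $\fcal_b$ as a pullback from the base of the bundle $p_b$. Hence $[\alpha] \in H^1(M/\fcal_b)$, and rationality is preserved inside $H^1(M,\rbb)$, giving $[\alpha] \in H^1(M/\fcal_b,\qbb)$.

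The main technical obstacle is exactly the factorization step: descending $p$ to the basic manifold $W$. The key ingredient is the density of $\fcal \cap L$ in each fiber $L$, which comes from the transverse parallelizability of $\fcal$ via Theorem \ref{basicfoliation}(1); without this density, $p|_L$ could be nonconstant and no factorization would exist. Everything else reduces to manipulation of pullbacks of basic forms and the injectivity statements for basic cohomology already recorded in the preliminaries.
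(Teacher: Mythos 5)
Your proof is correct, but it follows a genuinely different route from the paper's. The paper also starts from the monomorphism $H^1(M/\fcal_b)\hookrightarrow H^1(M/\fcal)$, but for the reverse inclusion it restricts a representative $\alpha$ of a rational class to a fiber $L$ of $p_b$: since $\fcal\cap L$ is dense and transversely parallelizable, Corollary \ref{dense} forces $[i^*\alpha]=0$, Lemma \ref{den1} then upgrades this to $i^*\alpha=0$, and a local-coordinate computation in a foliate atlas (writing $\alpha=\sum f_j\,dy_j$ with the coefficients constant along $L$ because they are $\fcal$-basic, hence $\fcal_b$-basic) shows that $\alpha$ only involves the base directions of $p_b$ and is therefore $\fcal_b$-basic. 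You instead globalize the integration construction from the proof of Corollary \ref{dense}: you build $p:M\to\rbb/\Lambda$ with $p^*d\theta=\alpha$, use density of the leaves of $\fcal\cap L$ in each fiber $L$ (Theorem \ref{basicfoliation}(1)) plus continuity to force $p$ to be constant on the fibers of $p_b$, and factor $p=q\circ p_b$ to exhibit $\alpha$ directly as a pullback from $W$. Your version avoids both the local-coordinate bookkeeping and the separate appeal to Lemma \ref{den1}/Corollary \ref{dense}, at the cost of using rationality earlier and more essentially (the period group must be discrete for $\rbb/\Lambda$ to be a circle); it is in spirit the foliated Tischler argument that the paper only sketches in a later remark. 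The only loose end is the degenerate case $[\alpha]=0$, where $\Lambda$ is trivial and $\rbb/\Lambda$ is $\rbb$ rather than $\sbb^1$; the factorization argument still goes through verbatim (or the case is dismissed as trivial since the zero class lies in both subspaces), so this is a presentational point rather than a gap.
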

\begin{proof}
    Note that we have a~monomorphism $H^1(M/ \fcal_b) \hookrightarrow H^1(M/ \fcal)$ induced by $\Omega^*(M,\fcal_b) \hookrightarrow \Omega^*(M,\fcal)$. In particular, it means that $H^1(M/\fcal_b,\qbb) \hookrightarrow H^1(M/\fcal,\qbb)$. By \ref{basicfoliation} the quotient space $M/\fcal_b $ is a~compact manifold, so $\dim H^1(M/\fcal_b,\qbb)= \dim H^1(M/\fcal_b)$.

    Now we show that if there exists a closed form $\alpha$ representing a class $[\alpha] \in H^1(M/\fcal,\qbb)\setminus H^1(M/\fcal_b)$ then for $i: L \to M$ the inclusion of the fiber of $p_b$ (where $p_b$ is as in Theorem \ref{basicfoliation}), $[i^* \alpha] \in H^1(L,\fcal_L)$ is non-zero and rational, contradicting \ref{dense}.

    Because $p_b$ is a~locally trivial bundle, $M$ is locally of the form $L \times U$ for $U$ an open subset of $W$, so we can consider foliate atlases for $\fcal_b$ with
    \begin{itemize}
        \item coordinates $x_1,...,x_{n-q}$ on $V \subset L$ which are leaf coordinates for $\fcal$;
        \item coordinates $y_1,...,y_{q_b}$ on $U$ which are transversal to leaves of $\fcal_b$;
        \item coordinates $y_{q_b+1},...,y_q$ on $V \subset L$ which are the remaining leaf coordinates for $\fcal_b$.
    \end{itemize}
    In those local coordinates, we have
    \begin{equation*}
        \alpha = \sum_{j=1}^q f_j dy_j, \ i^*\alpha = \sum_{i=q_b+1}^q f_i|_L dy_i
    \end{equation*}
    for $f_i \in \Omega^0(L\times U,\fcal)$ for all $i$. Since $\Omega^0(V\times U,\fcal) = \Omega^0(L\times U,\fcal_b)$, $f_i$ depends only on the coordinates on $U$, in particular $f_i|_L$ are constants for $i=q_b+1,...,q$ since $\fcal_b$ restricted to $L$ is dense in $L$.
    
    Let $\alpha$ be a closed basic $1$-form such that $$[\alpha] \in H^1(M/\fcal,\qbb)\setminus H^1(M/\fcal_b).$$ Suppose $i^* \alpha$ is exact, then by \ref{den1} $i^*\alpha = 0$. Locally on $V\times U$, it means that for $i=q_b+1,...,q$ we have $f_i|_L = 0$, so $f_i=0$. Therefore on $V \times U$ we get
    $$\alpha = \sum_{j=1}^{q_b} f_j dy_j$$
    and those $f_j$ do not depend on leaf coordinates for $\fcal_b$. Therefore $\alpha$ is an element of $H^1(M/\fcal_b)$, which is a~contradiction.
\end{proof}

\subsection{General case}
\label{gen}Firstly we consider a~more general situation. Let $M \xrightarrow{\pi} B$ be a~fiber bundle with connected fiber $F$. Denote the inclusion by $i: F \hookrightarrow M$. Suppose all spaces are compact and connected. Then we have the following subsequence of the long exact sequence for fibrations
$$ \pi_1(F) \xrightarrow{i_*} \pi_1(M) \xrightarrow{\pi_*} \pi_1(B) \to 0.$$
By Hurewicz isomorphism, which is functorial and is abelianization for first homotopy groups (and abelianization is right exact) we obtain exact sequence of singular homology
$$ H_1(F,\zbb) \xrightarrow{i_*} H_1(M,\zbb) \xrightarrow{\pi_*} H_1(B,\zbb) \to 0,$$
and moreover, by taking tensor product with $\qbb$ over $\zbb$ (which is also a~right exact functor) we obtain the exact sequence
$$ H_1(F,\qbb) \xrightarrow{i_*} H_1(M,\qbb) \xrightarrow{\pi_*} H_1(B,\qbb) \to 0.$$
Then by dualizing we obtain exact sequences $$0 \to H^1(B,\zbb) \xrightarrow{\pi^*} H^1(M,\zbb) \xrightarrow{i^*} H^1(F,\zbb),$$ $$0 \to H^1(B,\qbb) \xrightarrow{\pi^*} H^1(M,\qbb) \xrightarrow{i^*} H^1(F,\qbb)$$ and we can consider the latter as an exact sequence in de Rham cohomology.
Therefore we have a~short exact sequence
\begin{equation}\label{ses1}
    0 \to H^1(B,\qbb) \xrightarrow{\pi^*} H^1(M,\qbb) \xrightarrow{i^*} i^*(H^1(M,\qbb)) \to 0.
\end{equation}
Note that $H^1(M,\qbb) = H^1(M,\zbb) \otimes_\zbb \qbb$. Thus, the exact sequence (\ref{ses1}) splits as an exact sequence of vector spaces over a~field $\qbb$.

In the remaining part of the subsection we assume that $M \xrightarrow{\pi} B$ is a~principal $G$-bundle for a~compact and connected Lie group $G$. Then the tangent bundle $TM$ splits as $\mathcal{H} \oplus \mathcal{V}$, where the vertical bundle $\mathcal{V}$ is formed by $\ker \pi_*$ and the horizontal bundle $\mathcal{H}$ is $G$-invariant. Then $\pi_*: \mathcal{H} \to TB$ is an isomorphism fiber-wise and any vector field on $B$ lifts uniquely to a~$G$-invariant section of $\mathcal{H}$.

Now suppose that we have foliations $\fcal_G, \fcal_M, \fcal_B$ on $G,M,B$ respectively and the foliations are such that the maps $i, \pi$ are foliate. Moreover, we assume that the lift of $\fcal_B$ is contained in $\fcal_M$ -- as in Molino bundle (it follows by construction and Lemma 3.4 in \cite{Molino}) or Example \ref{GrpAction} (where we consider foliation by points on $G/H$).

For basic cohomology, consider the restrictions $$H^1(B/\fcal_B,\qbb) \xrightarrow{\pi_\fcal^*} H^1(M/\fcal_M,\qbb) \xrightarrow{i_\fcal^*} i^*(H^1(M/\fcal_M,\qbb))).$$ We want to ask whether in this case $\ker i_\fcal^* = \im \pi_\fcal^*$. By definition, $\ker i_\fcal^* = \ker i^* \cap H^1(M/\fcal_M,\qbb)$ (the latter taken as de Rham cohomology classes with rational integrals and here $i^*$ is considered as a~map in de Rham cohomology). Therefore we need to check that $\im \pi_\fcal^* = \im \pi^* \cap H^1(M/\fcal_M,\qbb)$. It translates to the condition that if a~class $\omega \in H^1(M/\fcal_M,\qbb)$ satisfies $\pi^*\alpha = \omega$ for some $\alpha \in H^1(B,\qbb)$ then $\alpha \in H^1(B/\fcal_B,\qbb)$. However, by the assumption that the lift of $\fcal_B$ via $\pi$ is contained in $\fcal_M$, a~vector field $X \in \Gamma(\fcal_B)$ is lifted to a~vector field $\tilde{X} \in \Gamma(\fcal_M)$ and thus we have $0 = \pi^*\alpha(\tilde{X}) = \alpha(\pi_*X) = \alpha(X)$. Since it works for any $X \in \Gamma(\fcal_B)$, we have that $\alpha$ is basic.

As a corollary we obtain that over $\qbb$ we have $$\dim_\qbb H^1(M/\fcal_M,\qbb) = \dim_\qbb H^1(B/\fcal_B,\qbb) + \dim_\qbb W$$ with $W \subset H^1(F/\fcal_F,\qbb)$.

Now we return to situation and assumptions as in Molino structure theorem \ref{Molino}. Firstly we assume that $\fcal$ is transversely parallelizable, since then as $\tilde{M}$ we can take an $SO(q)$-bundle. By the considerations above, it means we have $H^1(\widehat{M}/\widehat{\fcal},\qbb) = H^1(M/\fcal,\qbb) \oplus W$, where $W$ is a~submodule of $H^1(SO(q),\qbb)$. The latter considerations depend on $q = \codim \fcal$. 

If $q \neq 2$ then $W \subset H^1(SO(q),\qbb) = 0$ and therefore $\dim H^1(\widehat{M},\widehat{\fcal},\qbb)$ equals $\dim H^1(M,\fcal,\qbb)$.

If $q = 2$ then $\widehat{M}$ is a~$SO(2) = \sbb^1$-bundle and we use the Gysin sequence
$$ 0 \to H^1(M) \xrightarrow{\pi^*} H^1(\widehat{M}) \xrightarrow{\pi_*} H^0(M) \xrightarrow{e \wedge \cdot} H^2(M), $$
where $e$ is the Euler class of the circle bundle and $\pi_*$ is integration over fibers. We obtain the following
\begin{itemize}
    \item for $e = 0$ the Gysin sequence induces a~short exact sequence $0 \to H^1(M) \xrightarrow{\pi^*} H^1(\widehat{M}) \xrightarrow{\pi_*} H^0(M) \to 0$ and since $M$ is connected, we have $H^0(M) \cong \rbb \cong H^1(\sbb^1)$ so in particular $i^*(H^1(M)) = H^1(\sbb^1)$. By the fact that maps $i, \pi$ are foliate it follows that $H^1(\widehat{M}/\widehat{\fcal},\qbb) \cong H^1(M/\fcal,\qbb) \oplus H^1(\sbb^1,\qbb)$ and therefore $\dim H^1(M/\fcal,\qbb) = \dim H^1(\widehat{M}/\widehat{\fcal},\qbb) - 1$;
    \item for $e \neq 0$ we have $0 = \ker (e \wedge \cdot) = \im \pi_*$ and thus $\pi^*$ is an isomorphism. Since both $\pi^*$ and the splitting map are foliate, this induces an isomorphism $H^1(M/\fcal,\qbb) \cong H^1(\widehat{M}/\widehat{\fcal},\qbb)$ and therefore the equality of dimensions.
\end{itemize}

Now consider the situation when $\fcal$ is not transversely orientable. Let $\mathscr{M}$ be the orientation covering of foliated manifold $(M,\fcal)$ and let $\mathscr{F}$ be the lift of $\fcal$ to $\mathscr{M}$. We can act on $\mathscr{M}$ by $\zbb_2$ the 2-element group which corresponds to an involution. Denote the involution by $\sigma$. Then $q: \mathscr{M} \to M$ is a~twofold covering of $M$, $\mathscr{M}$ is compact and $\mathscr{F}$ is transversely orientable.

The map $\pi^*: H^1(M) \to H^1(\mathscr{M})$ is a~monomorphism which preserves basic forms and whose image is $H^1(\mathscr{M})^{\zbb_2}$. A transfer homomorphism $\tau: H^1(\mathscr{M}) \to H^1(M)$ given by $\tau(\omega) = (i^*)^{-1}(\omega + \sigma^* \omega)$ satisfies $\tau \pi^* = 2id_{H^1(M)}$. Since $\sigma$ is foliate, $\tau \pi^*$ maps $H^1(\mathscr{M}/\mathscr{F},\qbb)$ to $H^1(M/\fcal,\qbb)$.

We have a~short exact sequence
$$ 0 \to H^1(M/\fcal,\qbb) \xrightarrow{\pi^*} H^1(\mathscr{M}/\mathscr{F},\qbb) \to H^1(\mathscr{M}/\mathscr{F},\qbb)^{\zbb_2} \to 0$$
which splits with a~splitting map $\frac{1}{2}\tau$. Therefore we have $$H^1(\mathscr{M}/\mathscr{F},\qbb) = H^1(M/\fcal,\qbb) \oplus H^1(\mathscr{M}/\mathscr{F},\qbb)^{\zbb_2}$$ from which we obtain $$\dim_\qbb H^1(M/\fcal,\qbb) = \dim_\qbb H^1(\mathscr{M}/\mathscr{F},\qbb) - \dim_\qbb H^1(\mathscr{M}/\mathscr{F},\qbb)^{\zbb_2}.$$

\begin{rmk}
    It can be shown that Tischler's theorem \cite{Tisch} and its generalization to several non-singular forms \cite{TischGen} can be generalized to foliated manifolds. Namely, if $\omega_1,...,\omega_k$ are basic closed non-singular forms on foliated manifold $(M,\fcal)$, then they induce a~fiber bundle $M\to \tbb^k$ which is moreover foliate (i.e. maps every leaf of $\fcal$ to a~point). The proof follows as in \cite{TischGen} and such constructed submersion is foliate.

    Indeed, each element of $H^1(M/\fcal,\zbb) \subset H^1(M,\zbb)$ corresponds to a~homotopy class of functions $M \to \sbb^1$ with a fixed basepoint. If $f$ is a~representative of a homotopy class $a$ and $\theta$ is a~generator of $H^1(\sbb^1,\zbb)$ (given by rotation coordinate), then $f^*\theta$ is the initial element of $H^1(M/\fcal,\zbb)$. We will prove that if $f^*\theta \in H^1(M,\zbb)$ is basic then $f$ can be choosen to be foliate.

    Suppose otherwise, i.e. in class $a$ there is no foliate $f$. It means that $f:M\to \sbb^1$ is not constant along some leaf, so there is $X\in \Gamma(\fcal)$ such that $Xf \neq 0$, thus $df_x(X_x) \neq 0$ for some $x\in M$, in particular $df_x(X_x) = c \partial_\theta$. Then we have $f^*\theta(X_x) = \theta(df(X_x)) = \theta(c \partial_\theta) = c \neq 0$, so $f^*\theta$ is not basic.
\end{rmk}

\subsection{Dense leaves}
Suppose that leaves of $\fcal$ are dense. Consider the fiber bundle $\widehat{M} \to W$ as in \ref{Molino} with foliation $\widehat{\fcal}$ being transversely parallelizable. We have $H^1(\widehat{M}/\widehat{\fcal}) = H^1(W) \oplus \Tilde{H}$ where $\Tilde{H} \subset H^1(F/\fcal|_F)$ (Similarly for $\qbb$ coefficients.)

The maps can be represented in the following diagram
\begin{center}
{\begin{tikzcd}
F \arrow[r, "i"] & \widehat{M} \arrow[r, "p"] \arrow[d, "\pi"] & W \\
                 & M                                 &  
\end{tikzcd}}
\end{center}
where $F$ is a~closure of a~leaf of the lifted foliation $\widehat{\fcal}$.

Take $\omega$ a~representative of $[\omega]\in H^1(M/\fcal,\zbb)$. Then $\pi: \widehat{M} \to M$ is a~fiber bundle projection and $\pi^*[\omega] \in H^1(\widehat{M}/\widehat{\fcal},\zbb)$. Note that the induced map $\pi^*$ on cohomology is a~monomorphism.

Moreover, for $[\omega]\in H^1(M/\fcal,\qbb)$, when we consider the splitting (\ref{ses1}) we have $\pi^*\omega \in H^1(F/\widehat{\fcal}|_F,\qbb)$ because $\omega$ can be regarded as a~form on a~closure of a~leaf $L$ of $\fcal$, and the above diagram descends to 
\begin{center}
    \begin{tikzcd}
F \arrow[r, "i"] & F \arrow[r, "p"] \arrow[d, "\pi"] & \sing{\ast} \\
                 & \Bar{L}                                 &  
\end{tikzcd}
\end{center}
therefore $\pi^*$ maps forms on $\Bar{L} = M$ to forms on $F$. 

If $[\omega]$ is non-zero then $\pi^*[\omega] \in H^1(F/\widehat{\fcal}|_F,\qbb)$ is also non-zero and, since leaves of $\widehat{\fcal}|_F$ are dense in $F$, by $\ref{den1}$ it follows that $\pi^* \omega$ is non-singular and so is $\omega$. Similarly as in \ref{den1} $[\omega] = 0$ implies that $\omega = 0$ and thus the analogue of lemma \ref{den1} holds for dense Riemannian foliations.

However, by \ref{dense} $H^1(F/\fcal|_F,\qbb) = 0$, so $[\omega] = 0$. Therefore the analogue of \ref{dense} also holds.

\section{Basic Albanese map}\label{BasicAlb}
We begin this section by defining the basic Albanese map and deriving some basic properties. Then we consider some topological and geometric implications of the assumption that the basic Albanese map is a~submersion. Finally, we examine the basic Albanese maps for nilmanifolds with a~particular type of a~foliation.

\subsection{Construction and basic properties}\label{constr}
The presented construction is a~generalization of the classical Albanese map, also called Abel--Jacobi map.

Suppose we have a~Riemannian foliated manifold $(M,\fcal,g)$. Let $[\omega_1],...,[\omega_b]$ be a~basis of $H^1(M,\qbb)$ such that $\omega_1,...,\omega_k$ are generators of $\hcal^1(M/\fcal)$. Denote by $\Tilde{M}$ the universal covering of $M$. Similarly as in the classical case, we define a~map 
\begin{equation*}
    \Tilde{A}_\fcal: \Tilde{M} \to \rbb^k, \ [\gamma] \mapsto \left[ \int_{\gamma} \omega_i \right]_{i=1,...,k}.
\end{equation*}
Then the subgroup $\Lambda_\fcal \subset \rbb^k$ generated by $\left[ \int_{\sigma} \omega_i \right]_{i=1,...,k}$, where $\sigma$ are elements of the free part of $H_1(M,\zbb)$, is a~lattice in $\rbb^k$. Indeed, it is an Abelian subgroup of $\qbb^k$ generated by $k$ independent elements, so it is a~discrete subgroup of $\rbb^k$. \label{MissingPart} 
The map $\Tilde{A}_\fcal$ is $\left( \pi_1(M), \Lambda_\fcal \right)$-equivariant so it factorizes to a~map $A_\fcal: M \to \rbb^k/\Lambda_\fcal$.
\begin{defn}
    The map $A_\fcal$ defined above is called the \textit{basic Albanese} map for $\omega_1,...,\omega_k$ and the torus $Alb(M,\fcal) := \rbb^k/\Lambda_\fcal$ is called the \textit{basic Albanese torus} for $\omega_1,...,\omega_k$.
\end{defn}

The following properties of this basic Albanese map generalize some properties of classical Albanese map and are easy to prove.
\begin{lemma}\label{BasicAlbProperties}
    Let $A_\fcal$ be the basic Albanese map for $\omega_1,...,\omega_k$. Then
    \begin{itemize}
        \item $A_\fcal$ is smooth;
        \item the differential $dA_\fcal : TM \to TAlb(M,\fcal)$ is given explicitly by $dA_\fcal = (\omega_1,...,\omega_k)$, in particular $A_\fcal$ is a~submersion if and only if all forms $\omega_1,...,\omega_k$ are linearly independent, i.e. the form $\omega_1\wedge ... \wedge \omega_k$ is non-singular;
        \item every leaf of the foliation $\fcal$ (and therefore its closure) is contained in some fiber of $A_\fcal$.
    \end{itemize}
\end{lemma}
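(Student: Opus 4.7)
The plan is to verify each of the three bullets in turn, all of which follow directly from the construction of $A_\fcal$ as a lift of path-integration of the forms $\omega_1,\dots,\omega_k$.

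For smoothness, I would first note that on the universal cover $\widetilde{M}$ the map $\widetilde{A}_\fcal$ can be described more concretely: fix a basepoint $\tilde{x}_0$ and, for any $\tilde{x}\in\widetilde{M}$, choose any smooth path $\gamma$ from $\tilde{x}_0$ to $\tilde{x}$; then the $i$-th coordinate is $\int_\gamma \omega_i$, which is independent of the choice of $\gamma$ because $\widetilde{M}$ is simply connected and $\omega_i$ is closed. Smoothness in $\tilde{x}$ follows from the standard argument that for a smoothly varying endpoint the integral depends smoothly on it (concatenating with a short smooth path from $\tilde{x}$ to a nearby $\tilde{x}'$). Since $A_\fcal$ is obtained from $\widetilde{A}_\fcal$ by passing to the quotients by $\pi_1(M)$ and $\Lambda_\fcal$, both of which act by covering transformations, smoothness descends.

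For the differential, the same local description gives, for $v\in T_{\tilde{x}}\widetilde{M}$,
\begin{equation*}
    d\widetilde{A}_\fcal(v) = \bigl(\omega_1(v),\dots,\omega_k(v)\bigr)\in T_{\widetilde{A}_\fcal(\tilde x)}\rbb^k,
\end{equation*}
by differentiating the path integral with respect to the endpoint. Since the projections $\widetilde{M}\to M$ and $\rbb^k\to\rbb^k/\Lambda_\fcal$ are local diffeomorphisms, the identification $T\mathrm{Alb}(M,\fcal)\cong M\times\rbb^k$ via the flat structure turns $dA_\fcal$ into the tuple $(\omega_1,\dots,\omega_k)$ at every point. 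Then $A_\fcal$ is a submersion iff this tuple is surjective onto $\rbb^k$ at every point of $M$, which is equivalent to pointwise linear independence of $\omega_1,\dots,\omega_k$, which in turn is equivalent to $\omega_1\wedge\cdots\wedge\omega_k$ being nowhere zero.

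For the last bullet, I use that each $\omega_i$ is basic, so $i_X\omega_i=0$ for every $X\in\Gamma(\fcal)$. Given a leaf $L$ of $\fcal$ and a path $\gamma$ contained in $L$, the tangent vectors $\dot\gamma(t)$ lie in $T\fcal$, hence $\int_\gamma \omega_i=0$ for each $i$. Lifting to $\widetilde{M}$, any two points of a connected lift of $L$ differ by a path within that lift, so $\widetilde{A}_\fcal$ is constant on connected components of the preimage of $L$; projecting down, $A_\fcal$ is constant on $L$. Continuity of $A_\fcal$ then forces $A_\fcal$ to be constant on the closure $\overline{L}$ as well, so both $L$ and $\overline{L}$ sit inside a single fiber of $A_\fcal$.

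None of the three points is a genuine obstacle; the only small subtlety is the last one, where one must remember that basicness means $\omega_i$ annihilates $T\fcal$ (which is exactly the content of $i_X\omega_i=0$ from the definition of $\Omega^1(M,\fcal)$), and then combine this with continuity to upgrade from leaves to leaf closures.
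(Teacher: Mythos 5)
Your proof is correct; the paper itself gives no proof of this lemma (it only remarks that the properties ``are easy to prove''), and your argument is exactly the standard one implicit in the construction: smoothness and the formula for the differential from path-integration on the universal cover, and constancy on leaves (hence on leaf closures, since fibers are closed) from $i_X\omega_i=0$ for $X\in T\fcal$. The only cosmetic slip is the identification $T\mathrm{Alb}(M,\fcal)\cong M\times\rbb^k$, which should read $A_\fcal^*T\mathrm{Alb}(M,\fcal)\cong M\times\rbb^k$ (or $T\mathrm{Alb}(M,\fcal)\cong \mathrm{Alb}(M,\fcal)\times\rbb^k$); the substance is unaffected.
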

We consider the basic Albanese torus with flat metric.

Basic Albanese map for given forms may not be harmonic in general. However, this is the case when $\kappa_\fcal$ is basic, since then basic harmonic forms are harmonic (Lemma 2.4 in \cite{HRW}).

\begin{rmk}\label{rmkAlb}
    Assume that $\kappa_\fcal$ is basic. The basic Albanese map is related to the classical Albanese map in the following way. Complete $[\omega_1],...,[\omega_k]$ to a~basis of $H^1(M,\rbb)$ with forms $\omega_{k+1},...,\omega_b \in \hcal^1(M,g)$. Denote by $\pi: \rbb^b \to \rbb^k$ the projection onto first $k$ coordinates and by $A_M$ the Albanese map of $M$. The set $\Lambda \subset \rbb^b$ generated by $\left[ \int_{\sigma_j} \omega_i \right]$ for $i,j = 1,...,b$ and $\sigma_j$ being generators of torsionless part of $H_1(M,\zbb)$ is again a~lattice, so $\pi$ induces a~map $Alb(M)\to Alb(M,\fcal)$ which we will denote by the same symbol. Then $\pi$ maps $\Lambda$ to $\Lambda_\fcal$ and we have a~commutative diagram
    \begin{center}
        \begin{tikzcd}
        \rbb^b \arrow[r, "\pi"]                 & \rbb^k \\
        \Tilde{M} \arrow[u, "\Tilde{A}"] \arrow[ru, "\Tilde{A}_\fcal"'] &  
    \end{tikzcd}
    \end{center}
     which after taking quotients descends to
     \begin{center}
         \begin{tikzcd}
        Alb(M) \arrow[r, "\pi"]                 & Alb(M,\fcal) \\
        M \arrow[u, "{A}"] \arrow[ru, "{A}_\fcal"'] &  
    \end{tikzcd}.
     \end{center}
\end{rmk}

\begin{prop}
    Let $\kappa_\fcal$ be basic. Then the map $A_\fcal$ is a~harmonic map.
\end{prop}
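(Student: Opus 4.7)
The plan is to invoke Proposition \ref{HarmTorus}: a smooth map from a compact oriented Riemannian manifold to a flat torus is harmonic if and only if it pulls harmonic 1-forms back to harmonic 1-forms. So it suffices to identify $A_\fcal^*$ on harmonic 1-forms of $Alb(M,\fcal)$ and check harmonicity of the images on $M$.

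First I would describe the harmonic 1-forms on $Alb(M,\fcal) = \rbb^k/\Lambda_\fcal$. Since the metric is flat, the space $\hcal^1(Alb(M,\fcal))$ is spanned by the forms $dy_1,\dots,dy_k$ coming from the standard coordinates on $\rbb^k$. Then by Lemma \ref{BasicAlbProperties} the differential of $A_\fcal$ is given by $dA_\fcal = (\omega_1,\dots,\omega_k)$, and so
\begin{equation*}
A_\fcal^* dy_i = \omega_i \quad \text{for } i=1,\dots,k.
\end{equation*}
Consequently, for an arbitrary harmonic 1-form $\eta = \sum_i c_i\, dy_i$ on $Alb(M,\fcal)$ we have $A_\fcal^*\eta = \sum_i c_i\, \omega_i$.

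Next I would use the hypothesis that $\kappa_\fcal$ is basic. The forms $\omega_1,\dots,\omega_k$ were chosen as representatives in $\hcal^1(M/\fcal, g)$, i.e.\ they are basic harmonic forms for $\fcal$ in the sense of the generalized Hodge isomorphism recalled in Section \ref{notation}. In general basic harmonic forms need not be harmonic on $M$; however, the cited Lemma 2.4 of \cite{HRW} says precisely that when $\kappa_\fcal$ is basic, every basic harmonic form is also harmonic as a form on $(M,g)$. Therefore each $\omega_i$ lies in $\hcal^1(M,g)$, hence so does every $\rbb$-linear combination, and thus $A_\fcal^*\eta \in \hcal^1(M,g)$ for every $\eta \in \hcal^1(Alb(M,\fcal))$.

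Applying Proposition \ref{HarmTorus} to $A_\fcal \colon (M,g) \to (Alb(M,\fcal), \text{flat})$ concludes the argument. There is no real obstacle here: the whole point is that the basic Albanese map was engineered so that its pulled-back coordinate 1-forms are exactly the chosen basic harmonic representatives; the only substantive input is the equivalence between basic harmonicity and harmonicity on $M$ under the assumption $[\kappa_\fcal] = 0$-type (tautness) condition, which is supplied by Lemma 2.4 of \cite{HRW}.
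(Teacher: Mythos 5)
Your proof is correct, and it reaches the same intermediate claim as the paper --- that $A_\fcal^*$ sends harmonic $1$-forms on the flat torus to harmonic $1$-forms on $(M,g)$, so that Proposition \ref{HarmTorus} applies --- but by a more direct route. The paper instead invokes the factorization $A_\fcal = \pi \circ A$ from Remark \ref{rmkAlb}, where $A$ is the classical Albanese map (harmonic by Proposition \ref{AlbHarm}) and $\pi$ is an affine, hence harmonic, map of flat tori; both then preserve harmonic $1$-forms and so does their composition. Your version bypasses the classical Albanese map entirely by computing $A_\fcal^* dy_i = \omega_i$ and quoting Lemma 2.4 of \cite{HRW} to conclude that the $\omega_i$ are harmonic on $(M,g)$. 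Note that the paper's factorization argument secretly needs the same input: to complete $\omega_1,\dots,\omega_k$ to a basis of $\hcal^1(M,g)$ in Remark \ref{rmkAlb} one must already know that the $\omega_i$ are $g$-harmonic, which is exactly the ``$\kappa_\fcal$ basic $\Rightarrow$ basic harmonic is harmonic'' step. So your proof makes the essential hypothesis visible where the paper leaves it implicit; what the paper's route buys in exchange is the commutative diagram $A_\fcal = \pi\circ A$, which it reuses later (e.g.\ in the universal property). One small correction: at the end you describe the hypothesis as a ``$[\kappa_\fcal]=0$-type (tautness) condition,'' but the assumption is only that $\kappa_\fcal$ is a basic form, which is strictly weaker than tautness; your actual argument uses the correct hypothesis, so this is only a slip in the closing remark.
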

\begin{proof}
    By \ref{HarmTorus} it is enough to check that $A^*_\fcal$ maps harmonic 1-forms on $Alb(M,\fcal)$ to harmonic 1-forms on $M$.

    By \ref{rmkAlb} we have $A_\fcal^* = A^* \pi^*$. Since $\pi$ is a~harmonic map as an affine map of flat tori and $A$ is a~harmonic map to a~flat torus by \ref{AlbHarm}, they preserve harmonic $1$-forms, and so does their composition.
\end{proof}

\begin{prop}
    The map $A_\fcal^* : H^1(Alb(M,\fcal),\zbb) \to H^1(M/\fcal,\zbb)$ is an isomorphism.
\end{prop}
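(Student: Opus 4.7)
The key input is the formula $dA_\fcal = (\omega_1,\dots,\omega_k)$ from Lemma \ref{BasicAlbProperties}. Writing $x_1,\dots,x_k$ for the linear coordinates on the universal cover $\rbb^k$ of $Alb(M,\fcal)$, the 1-forms $dx_i$ descend to translation-invariant (hence harmonic) 1-forms on the flat torus and furnish an $\rbb$-basis of $H^1(Alb(M,\fcal),\rbb)$. The formula above translates into the pullback identity $A_\fcal^*[dx_i]=[\omega_i]$, and everything reduces to analysing this correspondence on the integer level.

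First I would check well-definedness. If $[\eta]\in H^1(Alb(M,\fcal),\zbb)$ and $\gamma$ is an integer 1-cycle in $M$, then naturality of integration gives
\begin{equation*}
\int_{\gamma} A_\fcal^{*}\eta \;=\; \int_{(A_\fcal)_{*}\gamma}\eta \;\in\;\zbb,
\end{equation*}
so $A_\fcal^{*}[\eta]\in H^1(M,\zbb)$. Taking the harmonic representative $\eta=\sum a_i\,dx_i$ on the flat torus, one has $A_\fcal^{*}\eta=\sum a_i\,\omega_i$, which is basic as a linear combination of the basic forms $\omega_i$; hence $A_\fcal^{*}[\eta]\in H^1(M/\fcal)\cap H^1(M,\zbb)=H^1(M/\fcal,\zbb)$.

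Injectivity is immediate from the real-coefficient picture: since $[\omega_1],\dots,[\omega_k]$ are a $\rbb$-basis of $\hcal^1(M/\fcal)\cong H^1(M/\fcal)$ and $A_\fcal^{*}[dx_i]=[\omega_i]$, the map $A_\fcal^{*}\colon H^1(Alb(M,\fcal),\rbb)\to H^1(M/\fcal)$ is an $\rbb$-linear isomorphism, so in particular it is injective on the subgroup $H^1(Alb(M,\fcal),\zbb)$. For surjectivity, let $[\alpha]\in H^1(M/\fcal,\zbb)$ and use the real isomorphism to find the unique $[\eta]\in H^1(Alb(M,\fcal),\rbb)$ with $A_\fcal^{*}[\eta]=[\alpha]$; I need to upgrade this to $[\eta]\in H^1(Alb(M,\fcal),\zbb)$. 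By the construction of $\Lambda_\fcal$ recalled in \ref{MissingPart}, a generating set of the lattice is given by the vectors $v_j=(A_\fcal)_{*}\sigma_j$, where $\sigma_j$ runs over generators of the torsion-free part of $H_1(M,\zbb)$. Then
\begin{equation*}
\int_{v_j}\eta \;=\; \int_{(A_\fcal)_{*}\sigma_j}\eta \;=\; \int_{\sigma_j} A_\fcal^{*}\eta \;=\; \int_{\sigma_j}\alpha \;\in\;\zbb,
\end{equation*}
because $[\alpha]$ is an integer class on $M$. Integrality on generators of $\Lambda_\fcal=H_1(Alb(M,\fcal),\zbb)$ suffices, so $[\eta]\in H^1(Alb(M,\fcal),\zbb)$, proving surjectivity.

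The only genuine subtlety is reconciling the two different integrality conditions: on the torus it means integrality of periods against $\Lambda_\fcal$, while on $M$ it means integrality against the full $H_1(M,\zbb)$. This reconciliation is built into the very definition of $\Lambda_\fcal$ as the lattice generated by the period vectors of the $\omega_i$, which is why the final period calculation is automatic rather than obstructed; no further analytic or geometric input (e.g.\ tautness or harmonicity of $A_\fcal$) is needed for this proposition.
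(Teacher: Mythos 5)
Your proof is correct and rests on the same key identity as the paper's, namely $A_\fcal^*\,dx_i = \omega_i$, which the paper verifies by the same chain-rule computation on vector fields. The paper's proof stops at that identity, which by itself only exhibits the real-coefficient isomorphism $H^1(Alb(M,\fcal),\rbb)\cong H^1(M/\fcal)$; your period computations against the generators of $\Lambda_\fcal$ (for surjectivity onto integer classes) and the naturality-of-integration argument (for well-definedness and the identification $H_1(Alb(M,\fcal),\zbb)=\Lambda_\fcal$) supply exactly the integrality bookkeeping that the paper leaves implicit, and your closing observation that no harmonicity or tautness hypothesis is needed here is also accurate.
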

\begin{proof}
     The forms $dx_i$, $i = 1,...,k$, are representatives of generators of $H^1(Alb(M,\fcal))$ and for any $X \in\Gamma(M)$ we have
\begin{equation*}
    (A_\fcal^*dx_i)(X) = dx_i(dA_\fcal \, X) = dx_i((\omega_1(X),...,\omega_k(X))) = \omega_i(X). \qedhere
\end{equation*}
\end{proof}

Similarly as in the case of classical Albanese map, we have the following universal property.
\begin{lemma}\label{UP}
        Let $\kappa_\fcal$ be basic. Then for any flat torus $T$ and any harmonic map $f: M \to T$ mapping any leaf of $\fcal$ to a~point there exists a~unique affine map of flat tori $h: Alb(M,\fcal) \to T$ such that $f = h \circ A_\fcal$.
\end{lemma}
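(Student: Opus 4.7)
The plan is to reduce this to the classical universal property (Proposition \ref{AlbUP}) via the commutative diagram of Remark \ref{rmkAlb}. Applying \ref{AlbUP} to the harmonic map $f$ produces a unique affine map $h_0\colon Alb(M)\to T$ with $f=h_0\circ A$. The core of the proof is to show that $h_0$ descends through $\pi\colon Alb(M)\to Alb(M,\fcal)$, i.e.\ $h_0=h\circ\pi$ for a unique affine $h\colon Alb(M,\fcal)\to T$; combined with $A_\fcal=\pi\circ A$ from Remark \ref{rmkAlb}, this will yield $f=h\circ A_\fcal$.

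To establish the descent, I would test on harmonic $1$-forms on $T$. Let $dy$ be such a form. By Proposition \ref{HarmTorus} applied to $f$, the pullback $f^*dy$ is harmonic on $M$; moreover it is basic, since for every $X\in T\fcal$ one has $i_X(f^*dy)=dy(df(X))=0$ (as $f$ is constant on leaves), and $d(f^*dy)=0$ so $i_Xd(f^*dy)=0$. Hence $[f^*dy]$ lies in $H^1(M/\fcal)\subset H^1(M,\rbb)$. On cohomology $f^*=A^*\circ h_0^*$, and $A^*$ sends the standard classes $[dx_i]$ of $H^1(Alb(M),\rbb)$ bijectively onto $[\omega_i]$. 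Writing $h_0^*[dy]=\sum_{i=1}^b c_i[dx_i]$ thus yields $[f^*dy]=\sum_{i=1}^b c_i[\omega_i]$, and since $H^1(M/\fcal)$ is the $\rbb$-span of $[\omega_1],\ldots,[\omega_k]$ inside the basis $[\omega_1],\ldots,[\omega_b]$ of $H^1(M,\rbb)$, we must have $c_i=0$ for $i>k$. This says $h_0^*$ has image in $\pi^*H^1(Alb(M,\fcal),\rbb)$, which for an affine map between flat tori is equivalent to the linear part of $h_0$ vanishing on $\ker\pi$ at the level of universal covers; hence $h_0$ factors as $h\circ\pi$.

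Uniqueness of $h$ follows from the uniqueness clause in \ref{AlbUP}: if $h_1,h_2$ both satisfy $f=h_j\circ A_\fcal$, then $h_1\circ\pi$ and $h_2\circ\pi$ each factor $f$ through $A$ in the sense of \ref{AlbUP}, so they agree, and surjectivity of $\pi$ forces $h_1=h_2$. The main obstacle is the factorization step, and the essential input is recognizing $f^*dy$ as basic so that its cohomology class lies in $H^1(M/\fcal)\subset H^1(M,\rbb)$; once this is in place, the passage from cohomological factorization to map-level factorization is routine for affine maps between flat tori.
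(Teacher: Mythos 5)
Your proof is correct, and it rests on the same essential ingredient as the paper's: since $f$ collapses each leaf to a point, $(f^*\omega)(X)=\omega(f_*X)=0$ for $X\in\Gamma(\fcal)$, so $f^*$ carries harmonic $1$-forms on $T$ into $H^1(M/\fcal)$. Where you diverge is in how this is turned into the map $h$. The paper works directly at the level of $A_\fcal$: it forms $(A_\fcal^*)^{-1}\circ f^*\colon H^1(T)\to H^1(Alb(M,\fcal))$ and invokes the correspondence between maps on first cohomology and affine maps of flat tori, i.e.\ it re-runs Nagano's argument for Proposition \ref{AlbUP} verbatim with $Alb(M)$ replaced by $Alb(M,\fcal)$. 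You instead use \ref{AlbUP} as a black box to get $h_0\colon Alb(M)\to T$ and then prove a descent statement through $\pi\colon Alb(M)\to Alb(M,\fcal)$ via the diagram of Remark \ref{rmkAlb}. Your route makes uniqueness a one-line consequence of surjectivity of $\pi$ and avoids redoing the torus-level construction, at the cost of the extra bookkeeping that the linear part of $h_0$ annihilates $\ker\pi$ and that $\tilde\pi(\Lambda)=\Lambda_\fcal$ (the latter is supplied by Remark \ref{rmkAlb}). One small caveat: your step ``$c_i=0$ for $i>k$'' uses that $H^1(M/\fcal)$ is precisely the span of $[\omega_1],\dots,[\omega_k]$ inside $H^1(M,\rbb)$; this is the standing hypothesis of the construction in \ref{constr} and is implicitly assumed by the paper as well, so it is not a gap relative to the paper's own argument.
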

\begin{proof}
    The proof follows similarly as in Proposition 1 of \cite{Nagano}. Consider the map $f^*: H^1(T) \to H^1(M)$ induced by $f$. For any $\omega \in H^1(T)$ and $X \in \Gamma(\fcal)$ we have $(f^* \omega)(X) = \omega(f_* X) = 0$ because $f_* X=0$ as $f$ maps any leaf of $\fcal$ to a~point. Therefore $f^* \omega \in H^1(M/\fcal)$.

    The map $(A_\fcal)^{-1} \circ f^*: H^1(T) \to H^1(Alb(M,\fcal))$ then induces the unique affine map $h: T \to Alb(M,\fcal)$ satisfying $f = h \circ A_\fcal$, similarly as in \cite{Nagano}.
\end{proof}

Therefore we can talk about \textit{the basic Albanese map of} $(M,\fcal)$.

\begin{rmk}
    Let $(M,\fcal)$ be a~foliated manifold with transversely parallelizable $\fcal$. Denote by $W$ its basic manifold. From \ref{TPrank} it follows that $Alb(W)$ and $Alb(M,\fcal)$ are diffeomorphic. Actually, the diffeomorphisms are affine (and thus harmonic) maps.
\end{rmk}

\subsection{Submersion case}\label{subm}
In this subsection we always assume that $A_\fcal$ is a~submersion.

\begin{ex} Recall \ref{BasicAlbProperties} that the basic Albanese map for rational basic harmonic forms $\omega_1,...,\omega_k$ is a~submersion if and only if $\omega_1\wedge ...\wedge \omega_k$ is non-singular. This gives the following examples of foliations with submersive basic Albanese map:
    \begin{itemize}
        \item Let $(M,\fcal,g)$ be a~Riemannian foliated manifold such that $g$ is a~formal metric for $M$ \cite{Kot} and a~bundle-like metric for transversely orientable $\fcal$, moreover we assume that $\kappa_\fcal$ is basic. Indeed, by Hodge decomposition we can represent every basic cohomology class as a~basic harmonic form (basic harmonic with respect to $g$). If the mean curvature form $\kappa$ is basic, then basic harmonic forms are harmonic. Choose basic harmonic forms $\omega_1,...,\omega_k$ such that $[\omega_1],...,[\omega_k]$ generate $H^1(M/\fcal,\qbb) \subset H^1(M,\qbb)$. Since the forms are non-zero and harmonic, their norms are non-zero constant \cite{Kot} and so their wedge product is nonsingular.
        
        \item Geometrically formal foliations \cite{HRW}. Such foliations are transversely orientable by definition, and therefore basic forms admit Hodge decomposition (Theorem 7.22 in \cite{Tond}). Using this decomposition, we can take basic harmonic representatives of each cohomology class. Therefore, as $\omega_1,...,\omega_k$ we can choose basic harmonic forms, whose norm is also constant by Lemma 3.4 of \cite{HRW}.
    \end{itemize}
\end{ex}

\begin{rmk}
    Suppose that the basic Albanese map is a~submersion. Then $\dim H^1(M/\fcal,\qbb) \leqslant \codim \fcal$. Moreover, if there is equality, the basic harmonic forms have constant length and $\kappa_\fcal$ is basic then the leaves of $\fcal$ are minimal.
\end{rmk}

\begin{lemma}\label{lem1}
    Let $A_\fcal: M \to \tbb^k$ be a~submersion. Then the closures of leaves of $\fcal$ are submanifolds of codimension at least $k$.
\end{lemma}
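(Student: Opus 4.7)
The plan is as follows. By Lemma \ref{BasicAlbProperties} every leaf $L$ of $\fcal$, and hence every leaf closure $\bar{L}$, sits inside a single fiber $A_\fcal^{-1}(x)$ of the basic Albanese map; meanwhile the assumption that $A_\fcal$ is a submersion makes each such fiber a closed embedded codimension-$k$ submanifold of $M$. So all the work is to verify that $\bar{L}$ is itself a submanifold of $M$, because then ``embedded submanifold contained in a codimension-$k$ embedded submanifold'' immediately forces $\codim \bar{L} \geqslant k$.

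Concretely, I would fix a leaf $L$ of $\fcal$ and put $x := A_\fcal(L) \in \tbb^k$. By Lemma \ref{BasicAlbProperties} we have $L \subset A_\fcal^{-1}(x)$; since $A_\fcal$ is continuous and $\tbb^k$ is Hausdorff, the fiber $A_\fcal^{-1}(x)$ is closed in $M$, so also $\bar{L} \subset A_\fcal^{-1}(x)$. The preimage theorem for submersions then gives that $A_\fcal^{-1}(x)$ is an embedded submanifold of $M$ of codimension exactly $k$. On the other hand, as recalled at the end of the Preliminaries, for any Riemannian foliated manifold $(M,g,\fcal)$ the partition $\bar{\fcal}$ of $M$ by leaf closures is a singular Riemannian foliation with adapted metric $g$, so in particular each $\bar{L}$ is an embedded submanifold of $M$. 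Since one embedded submanifold of $M$ sits inside another, $\bar{L}$ is an embedded submanifold of $A_\fcal^{-1}(x)$, whence
\begin{equation*}
    \dim \bar{L} \;\leqslant\; \dim A_\fcal^{-1}(x) \;=\; \dim M - k,
\end{equation*}
which is the desired bound on $\codim \bar L$.

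There is no real obstacle: the statement is a direct consequence of Lemma \ref{BasicAlbProperties} together with the standard submersion preimage theorem and the Molino-type fact, quoted in the Preliminaries, that the leaf closures of a Riemannian foliation partition $M$ into smooth embedded submanifolds. The only mild point to keep in mind is that different leaves need not lie in the same fiber of $A_\fcal$, and different strata of $\bar{\fcal}$ may have different dimensions, but the codimension bound holds uniformly for each single closure $\bar L$ because the argument is applied fiber-by-fiber.
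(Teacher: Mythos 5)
Your proof is correct and follows essentially the same route as the paper's (which is a two-line argument: the leaf, hence its closure, lies in a fiber of the submersion $A_\fcal$, and that fiber has codimension $k$). The only difference is that you spell out the implicit ingredients --- closedness of the fiber, the preimage theorem, and the fact from the Preliminaries that leaf closures of a Riemannian foliation are embedded submanifolds --- which the paper leaves tacit.
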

\begin{proof}
    Let $L$ be a~leaf of the foliation $\fcal$. Denote by $F$ the fiber of $A_\fcal$ which contains $L$. Then $\codim_M F = k$ and $\codim_M \overline{L} \geqslant k > 0$.
\end{proof}

\begin{rmk}
    The inverse implication is not true. Indeed, consider a~Riemannian foliation on a~compact manifold $M$ induced by a~submersion to $\sbb^k \neq M$ with $k>1$. For example one can take a~foliation induced by a~Hopf fibration \cite{wilk}. Then the first basic cohomology is isomorphic to $H^1(\sbb^k) = 0$ and the leaves are closed, so not dense in $M$.
\end{rmk}

By Ehresmann theorem (Lemma 9.2. in \cite{Michor}), the assumption that $A_\fcal$ is a~submersion implies that $A_\fcal$ is a~fiber bundle with a~fiber $F$.
We can therefore consider the restricted foliation $\fcal|_F$. Let $i: F \to M$ be the inclusion map.

\begin{rmk}
    If $A_\fcal$ is a~harmonic submersion, then the induced foliation of $M$ by connected components of fibers is transversely parallelizable. If moreover basic harmonic forms have constant length (e.g. $\fcal$ is transversely formal and $A_\fcal$ is harmonic for a transversely formal metric $g$), then the fibers are also minimal.
\end{rmk} 

If we assume $\fcal$ is Riemannian, it is natural to ask whether the restricted metric $g_F$ is bundle-like for $\fcal|_F$. Indeed, we have
\begin{lemma}\label{blMetr}
    Let $g$ be a~Riemannian metric on $M$ which is bundle-like for $\fcal$. Then the restricted metric $g_F$ on $F$ is bundle-like for $\fcal|_F$.
\end{lemma}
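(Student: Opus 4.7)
My plan is to reduce the bundle-like condition for $g_F$ on $\fcal|_F$ to the known bundle-like condition for $g$ on $\fcal$ by extending vector fields from $F$ to a neighborhood in $M$ in a way compatible with the submersion $A_\fcal$.

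First, by Lemma \ref{BasicAlbProperties} every leaf of $\fcal$ lies in a fiber of $A_\fcal$, hence $T(\fcal|_F) = T\fcal|_F$; since the $g$-orthogonal decomposition $TF = T\fcal \oplus (\mathcal{E}\cap TF)$ holds (with $\mathcal{E}:=T\fcal^{\perp}\subset TM$), the $g_F$-orthogonal complement of $T(\fcal|_F)$ inside $TF$ equals $\mathcal{E}\cap TF$. Unwinding the condition $\mathcal{L}_{X}\,g_{N\fcal}=0$ through the metric identification $N\fcal\cong\mathcal{E}$, the bundle-like condition for $g_F$ on $(F,\fcal|_F)$ amounts to verifying
\[
X\,g(V,W) \;=\; g([X,V],W) + g(V,[X,W])
\]
for all $X\in\Gamma(\fcal|_F)$ and $V,W\in\Gamma(\mathcal{E}\cap TF)$.

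To establish this identity, I would extend $X$ to $\tilde{X}\in\Gamma(\fcal)$ on a neighborhood of $F$ in $M$ --- automatically tangent to every nearby fiber of $A_\fcal$, since leaves of $\fcal$ are contained in fibers --- and extend $V,W$ to local sections $\tilde{V},\tilde{W}$ of the smooth rank-$(q-k)$ subbundle $\mathcal{E}\cap TF'\subset TM$, which is well-defined on a tubular neighborhood of $F$ because $A_\fcal$ is a submersion and the orthogonal splitting $TM=T\fcal\oplus\mathcal{E}$ is smooth. The three vector fields $\tilde{X},\tilde{V},\tilde{W}$ are then tangent to each fiber $F'$, so the brackets $[\tilde{X},\tilde{V}]$ and $[\tilde{X},\tilde{W}]$ remain tangent to the fibers, and their restrictions to $F$ coincide with $[X,V]$ and $[X,W]$. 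Applying the bundle-like identity for $(M,g,\fcal)$ to the triple $(\tilde{X},\tilde{V},\tilde{W})$ and restricting the resulting equation to $F$ yields the displayed identity.

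The only technical step is the construction of the extensions $\tilde{V},\tilde{W}$, which is a direct application of the standard extension lemma for smooth sections of a vector subbundle, here $\mathcal{E}\cap TF'$ over a tubular neighborhood of $F$; I do not expect any substantive obstacle. The main conceptual input is that the basic Albanese submersion endows $M$ with a fibration whose fibers are $\fcal$-saturated, which lets one transport the bundle-like structure from $M$ to each fiber in a purely formal way.
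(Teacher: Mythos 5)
Your proof is correct, but it takes a genuinely different route from the paper's. You verify the bundle-like condition for $g_F$ directly in its Lie-derivative form, reducing it (via the identification $N(\fcal|_F)\cong \mathcal{E}\cap TF$ with $\mathcal{E}=T\fcal^{\perp}$) to the bracket identity $Xg(V,W)=g([X,V],W)+g(V,[X,W])$, and then transport the corresponding identity from $(M,g,\fcal)$ by extending $X$ inside $T\fcal$ and $V,W$ inside the subbundle $\mathcal{E}\cap\ker dA_\fcal$ over a tubular neighbourhood; the essential points you correctly secure are that this intersection has constant rank $q-k$ (because $T\fcal\subset\ker dA_\fcal$, so $\ker dA_\fcal=T\fcal\oplus(\mathcal{E}\cap\ker dA_\fcal)$), that the extensions stay $g$-orthogonal to $T\fcal$ on the whole neighbourhood and not merely along $F$, and that brackets of fields tangent to $F$ restrict to brackets on $F$. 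The paper instead invokes the Levi-Civita characterization of bundle-like metrics from \cite{MetrFol} ($g(\nabla_X V,Y)+g(X,\nabla_Y V)=0$ for $V$ tangent and $X,Y$ orthogonal to the leaves) together with the Gauss formula $\nabla^M_XY=\nabla^F_XY+\alpha(X,Y)$, the point being that the second-fundamental-form terms are normal to $F$ and hence drop out of all the inner products. Your argument has the advantage of using only the definition of bundle-like given in the preliminaries and no connection theory, at the cost of a slightly more delicate extension step; the paper's argument is shorter once the \cite{MetrFol} characterization is accepted, but it must still perform essentially the same extension of vector fields from $F$ to $M$. Both proofs rest on the same conceptual input, namely that the fibers of $A_\fcal$ are $\fcal$-saturated.
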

\begin{proof}
    By \cite{MetrFol} the claim that $g_F$ is bundle-like for $\fcal|_F$ is equivalent to the claim that $g_F(\nabla_X^F V, Y) + g_F(X,\nabla_Y^F V) = 0$ for any vector field $V \in \Gamma(F)$ which is tangent to the leaves of $\fcal|_F$ and any vector fields $X,Y \in \Gamma(F)$ which are orthogonal to leaves of $\fcal|_F$ and $\nabla^F$ is the Levi-Civita connection for $(F,g_F)$.

    Choose $X,Y \in \Gamma(F)$ and extend them to vector fields on $M$ and denote the extensions by the same symbol, then by Gauss formula
    $$ \nabla_X^M Y = \nabla_X^FY + \alpha(X,Y) $$
    where $\alpha: \Gamma(F) \times \Gamma(F) \to \Gamma(F)^\perp$ is the second fundamental form.

    We can extend vector fields on $F$ which are foliate with respect to $\fcal|_F$ to vector fields on $M$ so that they are foliate with respect to $\fcal$ and, when we use local trivialization for fiber bundle and consider a~trivialized neighbourhood $U \times F$, in which we identify initial fiber $F$ with $\sing{x} \times F$, the vector fields are of the form $\sum f_i X_i$ on $U\times F$, where each $X_i$ is a~vector field of $F$ and each $f_i \in C^\infty(U)$ is such that there exist neighbourhoods $x \in V_1 \subset V_2 \subsetneq U$ satisfying $f_i|_{V_1} \equiv 1$ and $f_i|_{U\setminus V_2} = 0$, and the vector fields are extended to $0$ on $M$ without the trivialized neighbourhood.
    Denoting the extended vector fields by the same symbols, we have
    \begin{equation*}
        \begin{split}
            0 &= g(\nabla_X^M V, Y) + g(X, \nabla_Y^M V) \\&= g(\nabla_X^F V + \alpha(X,V), Y) + g(X, \nabla_Y^F V + \alpha(Y,V)) \\&= g(\nabla_X^F V, Y) + g(X, \nabla_Y^F V)
        \end{split}
    \end{equation*}
    which proves the initial claim.
\end{proof}

\begin{rmk}
    Even if the fiber $F$ of basic Albanese map is a~minimal submanifold of $M$, it need not be a~totally geodesic submanifold.
\end{rmk}

\begin{ex}
{Even when the initial foliation is transversely parallelizable, the foliation restricted to a~fiber of basic Albanese map need not have this property. Let $\fcal$ the Riemannian foliation given by a~Hopf fibration, for example a~foliation on $\sbb^{3}$ induced from submersion $\sbb^{3} \to \sbb^2$. Consider $\sbb^{3} \times \tbb^1$ and a~product foliation $\mathcal{G} = \fcal \times *$, where $*$ denotes the foliation on $\tbb^1$ by singletons. Then the basic Albanese torus of the foliation is $\tbb^1$ and the fiber of basic Albanese map is $\sbb^{3}$ with the induced foliation being $\fcal$.}

    The foliation $\fcal$ is not transversely parallelizable since $\sbb^2$ is not parallelizable. However, as $\sbb^2 \times \tbb^1$ is parallelizable as a~$3$-dimensional closed manifold, the foliation $\mathcal{G}$ is transversely parallelizable.
\end{ex}

\begin{ex}\label{GrpAction}
    Let $G$ be a~compact connected Lie group. Let $H$ be a~closed and connected subgroup of $G$ (so $H$ and $G/H$ are compact manifolds) and suppose $W$ is a~dense connected subgroup of $H$. Denote respectively by $\fcal_W, \fcal_H$ the foliation of $G$ by cosets of $W, H$.

    The foliations $\fcal_W, \fcal_H$ are homogeneous and it can be shown that the basic foliation \ref{basic_foliation} of $\fcal_W$ and the basic foliation of $\fcal_H$ are equal, therefore $G/H$ is a~basic manifold for $\fcal_W$. Applying the discussion in \ref{gen} to the principal $H$-bundle $G \to G/H$ we obtain an isomorphism $H^1(G/\fcal_W,\zbb) \cong H^1(G/H,\zbb)$ (since $H^1(H,\fcal_W|_H) = 0$ as $\fcal_W|_H$ is transversely parallelizable on $H$ and has dense leaves) and therefore $Alb(G,\fcal)$ is diffeomorphic to $Alb(G/H)$.
\end{ex}

\subsection{Application to foliations on nilmanifolds}\label{nil}
We recall a~construction from \cite{CW}. Let $N$ be a~nilpotent, simply connected Lie group. Let $\Gamma \subset N$ be a~torsion-free finitely generated subgroup containing $\Gamma_0$ a~cocompact group for $N$. Then by a~theorem of Malcev \cite{malcev} there exists a~nilpotent, simply connected Lie group $N_1$ such that it contains $\Gamma_1$ as a~cocompact group and there is a~Lie group homomorphism $p: N_1 \to N$ which
\begin{itemize}
    \item induces an isomorphism $\Gamma_1 \cong \Gamma$;
    \item is a~surjective submersion with connected fibers.
\end{itemize}
By the latter property, the fibers of $p$ define a~foliation $\fcal_1$ on $N_1$. Since it is $\Gamma_1$-invariant, it projects to a~foliation $\fcal$ on $M := N_1/\Gamma_1$. Thus we have the following diagram
\begin{center}
    \begin{tikzcd}
N_1 \arrow[r, "p"] \arrow[d, "\pi_{\Gamma_1}"'] & N \arrow[d, "\pi_{\Gamma_0}"] \\
M                                           & N/\Gamma_0                   
.\end{tikzcd}
\end{center}
The foliation $\fcal$ inherits geometric structure from the foliation $\fcal_1$. In particular, if $\fcal_1$ is Riemannian then $\fcal$ is also Riemannian. The abovementioned claims are described in more detail in \cite{CW}.

Let $\Omega^*(N;\Gamma)$ denote the $\Gamma$-invariant forms on $N$ and $\Omega^*(N_1/\fcal_1;\Gamma_1)$ denote the $\Gamma_1$-invariant $\fcal_1$-basic forms on $N_1$. We have the following isomorphisms
\begin{equation}\label{isos}
    \Omega^*(M/\fcal) \xrightarrow[\cong]{\pi^*_{\Gamma_1}} \Omega^*(N_1/\fcal_1;\Gamma_1) \xleftarrow[\cong]{p^*} \Omega^*(N;\Gamma).
\end{equation} 
Moreover, $\pi_\Gamma^*$ induces an isomorphism in basic cohomology and we will use the fact that for $\omega \in H^1(M/\fcal)$ and $\sigma \in H_1(M,\zbb)$ we have
$$ \int_\sigma \omega = \int_{\Tilde{\sigma}} \pi_\Gamma^* \omega $$
where $\Tilde{\sigma}$ is a~path in $N_1$ which lifts $\sigma$.

We have a~chain of inclusions $\Omega^*(N;N) \hookrightarrow \Omega^*(N,\Gamma) \hookrightarrow \Omega^*(N,\Gamma_0)$ and by the Nomizu theorem \cite{Nomizu} the composition induces an isomorphism in cohomology $H^*(N;N) \cong H^*(N/\Gamma_0)$. Also note that $\Omega^*(N;N) \cong A^*(\mathfrak{n})$ where $\mathfrak{n}$ is the Lie algebra of $N$ and $A^*(\mathfrak{n})$ is the Chevalley--Eilenberg complex of $\mathfrak{n}$, where $\mathfrak{n}$ is the Lie algebra associated to $N$. Therefore we have $H^*(\mathfrak{n}) \cong H^*(N/\Gamma_0)$.

One may ask about the relationship between the Albanese torus of $N/\Gamma_0$ and the basic Albanese torus of $(M,\fcal)$. To give such a~relationship, we firstly define an action of $N$ on $\Omega^*(M/\fcal)$. 

Choose $\omega \in \Omega^*(M/\fcal)$, then by (\ref{isos}) there exists $\alpha \in \Omega^*(N;\Gamma)$ such that $\pi_\Gamma^* \omega = p^* \alpha$, or equivalently $\omega = (\pi_\Gamma^*)^{-1} p^* \alpha$. We will define an action of $N$ on $\Omega^*(M/\fcal)$ as follows. For $\xi \in N$ and $\omega \in \Omega^*(M/\fcal)$ such that $\omega = (\pi_\Gamma^*)^{-1} p^* \alpha$ we define $\xi.\omega := (\pi_\Gamma^*)^{-1} p^* L_\xi^*\alpha$. Note that $\omega$ is $N$-invariant, i.e. $N_\omega = N$, if and only if $\alpha \in \Omega^*(N;N)$.

The inclusion map $\Omega^*(N;N) \hookrightarrow \Omega^*(N;\Gamma)$ induces a monomorphism $j^*: H^*(N/\Gamma_0) \hookrightarrow H^*(M/\fcal)$. By (\ref{isos}), for any $[\omega] \in H^*(M/\fcal)$ there is a~closed $\alpha \in \Omega^*(N,\Gamma)$  such that $\omega = (\pi_\Gamma^*)^{-1} p^* \alpha$. Moreover, the map $j^*: H^*(N/\Gamma_0) \hookrightarrow H^*(M/\fcal)$ is induced by composition $$\Omega^*(N;N) \hookrightarrow \Omega^*(N;\Gamma) \xrightarrow[\cong]{(\pi_\Gamma^*)^{-1} p^*} \Omega^*(M/\fcal)$$ and is well defined because those maps commute with de Rham differentials. Therefore, $\im j^*$ consists precisely of elements of the form $(\pi_\Gamma^*)^{-1} p^* \alpha$ for $\alpha \in \Omega^*(N;N)$.

In general, the monomorphism $H^1(N/\Gamma_0) \to H^1(M/\fcal)$ need not preserve rational classes, as the following example shows.
\begin{ex}
    Consider $\rbb$ with addition (in particular it is an Abelian Lie group so a~nilpotent Lie group) and $\Gamma = \left< 1, \eta \right>$ where $\eta \in \rbb\setminus\mathbb{Q}$. Such $\Gamma$ is a~torsionfree subgroup of $\rbb$ and contains $\Gamma_0$ generated by $1$ as a~cocompact group for $\rbb$.

    Then for $\rbb^2$ with addition and $\Gamma_1 = \left< (1,0), (\eta, 1) \right>$ consider the map $p: \rbb^2 \to \rbb$ given by projection on the first coordinate. The map $p$ is obviously a~surjective homomorphism of Lie groups which is also a~submersion and has connected fibers (isomorphic to $\rbb$). Moreover, $p$ maps $\Gamma_1$ isomorphically to $\Gamma$. The fibers of $p$ induce a~dense linear foliation on the torus $T := \rbb^2/\Gamma_1$, denote the foliation by $\fcal$ and thus $H^1(T/\fcal)$ contains only irrational classes. Therefore the monomorphism $H^1(\rbb/\Gamma_0) \hookrightarrow H^1(T/\fcal)$ maps integer classes to irrational classes. 
\end{ex}

\begin{ex}
    This example comes from \cite{CW}. More precisely, we consider the Iwasawa manifold $\Gamma_0 \backslash N$, where $N$ is the complex Lie group with $\dim_\cbb N = 3$ whose elements are given by matrices of the form $$\left(\begin{array}{rrr}
            1 & z_1 & z_3 \\
            0 & 1 & z_2 \\
            0 & 0 & 1
        \end{array}\right)$$ and $\Gamma_0$ is a~subgroup of $N$ consisting of matrices such that $z_1,z_2,z_3$ are Gauss integers, so elements of the ring $\zbb[i]$.
    It can be shown that $N$ contains a~subgroup $\Gamma$ which contains $\Gamma_0$ and consists of matrices of the form
        $$\left(\begin{array}{rrr}
            1 & x_1 + i(y_1 + sy_1') & x_3 + sx_3' + i(y_3 + sy_3') \\
            0 & 1 & x_2 + iy_2 \\
            0 & 0 & 1
        \end{array}\right)$$
    where $s$ is an irrational number and $x_i, x_i', y_i, y_i'$ are integers.

    Consider $N_1 := \rbb^9$ with group action given by
    \begin{equation*}
            \begin{split}
                (a_1,...,a_9)\star (x_1,...,x_9) =(&a_i + x_i, 
       a_6 + x_6 + a_1x_4 - a_2x_5,\\& a_7 + x_7- a_3x_5, a_8 + x_8 + a_1x_5 + a_2x_4, \\&a_9 + x_9 + a_3x_4).
            \end{split}
    \end{equation*}
    Denote by $\Gamma_1$ the uniform subgroup of $N_1$ consisting of elements with integer terms. Then the map $u: N_1 \to N$ defined as $$(x_{1}, ..., x_{9}) \longmapsto (x_{1} + i(x_{2} + sx_3), x_{4} + i x_{5}, x_6 + s x_{7} + i (x_{8} + sx_9))$$ is a~submersion of Lie groups which maps $\Gamma_1$ isomorphically to $\Gamma$. This submersion has connected fibers which form a~foliation that descends to a~foliation $\fcal$ of $\Gamma_1\backslash N_1$. The foliation is transversely symplectic, transversely holomorphic and is of complex codimension $3$, so of real dimension $3$.

    The leaves of this foliation can be parametrized as follows. If we denote by $L$ the leaf given by the fiber of $u$ over a~point $(z^0_1,z^0_2,z^0_3)$ with $z^0_j = x^0_j + iy^0_j$, then we have a~map $\rbb^3 \to L \subset N_1$ given by $$\left(r_{1}, r_{2}, r_{3}\right) \longmapsto \left(x^0_1, -r_{1} s + y^0_1, r_{1}, x^0_2, y^0_2, -r_{2} s + x_3^0, r_{2}, -r_{3} s + y_3^0, r_{3}\right).$$ From this we can find vector fields on $\Gamma_1 \backslash N_1$ which are tangent to the leaves and they are generated by $$v_1' = -s {\partial_{x_{2}} } +{\partial_{x_{3}} }, v_2' = -s {\partial_{x_{6}} } +{\partial_{x_{7}} }, v_3'= -s {\partial_{x_{8}} } +{\partial_{x_{9}} }.$$

    On $\Gamma_1\backslash N_1$ we have the following generating left-invariant vector fields
        \begin{equation*}
        \begin{array}{c}
             X_i = \partial_{x_i}, i = 1,2,3 \\
             X_4 = {\partial_{x_{4}} } + x_{1} {\partial_{x_{6}} } + x_{2} {\partial_{x_{8}} } + x_{3} {\partial_{x_{9}} } \\
             X_5 = {\partial_{x_{5}} } -x_{2} {\partial_{x_{6}} } -x_{3} {\partial_{x_{7}} } + x_{1} {\partial_{x_{8}} } \\
             X_j = \partial_{x_j}, j = 6,...,9
        \end{array}
    \end{equation*}    
    {(observe that $\partial_{x_6} = [X_1,X_4], \partial_{x_7} = -[X_3,X_5], \partial_{x_8} = [X_2,X_4]$ and $\partial_{x_9} = [X_3,X_4]$)}
    and their corresponding dual left-invariant $1$-forms
    \begin{equation*}
        \begin{array}{c}
             w_i = dx_i, i = 1,...,5 \\
             w_6 = -x_1 dx_4 + x_2 dx_5 + dx_6 \\
             w_7 = x_3 dx_5 + dx_7 \\
             w_8 = -x_2 dx_4 - x_1 dx_5 + dx_8\\
             w_9 = -x_3 dx_4 + dx_9
        \end{array}
    \end{equation*}
    among which only $dx_1, dx_2, dx_3, dx_4, dx_5$ are closed and not exact. Therefore they generate $H^1(\Gamma_1\backslash N_1)$. 

    We define a~Riemannian metric $g$ on $\Gamma_1 \backslash N_1$ so that the vector fields $X_1,...,X_9$ are orthonormal. With respect to this metric, vector fields $v_i'$ have norm $\sqrt{s^2 + 1}$ so we take normalizations $v_i:= \frac{1}{\sqrt{s^2 + 1}}v_i'$. By calculations the vector fields $v_1,v_2,v_3$ form an orthonormal frame of leaves of $\fcal$. The vector fields $$X_i' := X_i - g(X_i,v_1)v_1 - g(X_i,v_2)v_2 - g(X_i,v_3)v_3$$ are orthogonal to leaves of $\fcal$. It can be computed that for any $i,j$ we have
    \begin{itemize}
        \item $g(X_i',X_j')$ is a~constant
        \item for any $k =1,2,3$ the Lie derivative is given by $$\mathcal{L}_{v_k}g = g([v_k,\bullet],\circ) + g(v_k,[\bullet,\circ]) = 0$$
    \end{itemize}
    and thus it follows that metric $g$ is bundle-like for the foliation $\fcal$. By direct calculations it can be shown that for Levi-Civita connection $\nabla$ and for $k=1,2,3$ we have $\nabla_{v_k}v_k = 0$ and thus mean curvature form for $\fcal$ satisfies $\kappa^\sharp = 0$ meaning $\kappa = 0$. In particular, basic harmonic forms are harmonic and the foliation is taut.
    
    Then $\Omega^1(\Gamma_1\backslash N_1,\fcal_1)$ is generated by $$ \omega_1 = dx_2 + sdx_3, \omega_2 = dx_6 + sdx_7, \omega_3 = dx_8 + sdx_9, dx_1, dx_4, dx_5, $$
    among them only $\omega_1, dx_1, dx_4, dx_5$ are closed and not exact and so they are representatives of $H^1(\Gamma_1\backslash N_1/\fcal_1)$. Among the closed exact basic forms only $\omega_1$ is an irrational form. On the other hand, the forms $dx_1,dx_4,dx_5$ are basic harmonic. Indeed, we compute $\delta_\fcal = \pm \Bar{\ast} d \bar{\ast}$ and $ d(\bar{\ast}dx_i) = d(\pm \ast(dx_i \wedge \chi))= 0$ for $i=1,4,5$. Therefore we have dimension $3$ basic Albanese torus. Moreover, the forms $dx_1,dx_4,dx_5$ are non-singular so the map is a~submersion.
    
    The basic Albanese map $A_\fcal$ can be given by projection to $(x_1,x_4,x_5)$ modulo $\zbb^3$, so $$A_\fcal^{-1}(P_1,P_4,P_5) = (P_1,x_2,x_3,P_4,P_5,x_6,...,x_9)$$ and then $(0,a_2,a_3,0,0,a_6,...,a_9) \star (P_1,x_2,x_3,P_4,P_5,x_6,...,x_9)$ is equal to
    \begin{equation*}
        \begin{split}
            (&P_1,a_2 + x_2, a_3 + x_3,P_4,P_5, a_6 + x_6 - a_2P_5, a_7 + x_7 - a_3 P_5, \\&a_8 + x_8 + a_2P_4, a_9 + x_9 + a_3P_4)
        \end{split}
    \end{equation*}which is again in the fiber.

    Note that each fiber is a~torus -- indeed, the fiber over $(0,0,0)$ is a~torus and the left action by $(P_1,0,0,P_4,P_5,0,0,0,0)$ on the fiber induces a~diffeomorphism between $A_\fcal^{-1}((0,0,0))$ and $A_\fcal^{-1}((P_1,P_2,P_3))$. This diffeomorphism is also right (but not left) invariant and foliate for $\fcal$ restricted to fibers. The restricted foliation is parametrized by $(P_1, -r_{1} s + y^0_1, r_{1}, P_2, P_3, -r_{2} s + x_3^0, r_{2}, -r_{3} s + y_3^0, r_{3})$, and since $s$ is irrational the foliation is dense (because e.g. it is dense on the torus being a~fiber over $(0,0,0)$).

    On the Iwasawa manifold $\Gamma \backslash N$ the first real cohomology is represented by forms $dx_1, dy_1, dx_2,  dy_2$ and $p^*$ maps them to $dx_1, \omega_1, dx_4, dx_5 $ respectively. In this case we have 3 integer forms and one irrational form in the image of $H^1(\Gamma \backslash N,\zbb) \to H^1(\Gamma_1 \backslash N_1)$.
\end{ex}

\section{Some applications to singular Riemannian foliations}\label{Appl}
In this section we use the basic Albanese map to obtain some information on the singular Riemannian foliation obtained from a~Riemannian foliation by taking leaf closures.

Let $(M,g,{\mathcal F})$ be a~compact foliated Riemannian manifold of dimension $n$ with a~regular Riemannian foliation $\fcal$ of codimension $q$ for which $g$ is bundle-like. 

If $\alpha$ is 1-basic form, then $\alpha^{\sharp}$ is an $\mathcal F$-orthogonal foliate vector field. Indeed,
\begin{itemize}
    \item $ X \in T{\mathcal F}$ implies $g(\alpha^{\sharp},X) = \alpha (X) = 0$ so $\alpha^{\sharp}$ is $\fcal$-orthogonal;
    \item since the metric $g$ is bundle-like and the form $\alpha$ is basic, it projects to a~1-form $\bar{\alpha} $ on the transverse manifold $N$ (i.e. the manifold $N = \coprod_{i}N_i$ where $f_i: U_i \to N_i$ are local submersions defining the foliation; we define $\bar{\alpha}$ by $\bar{\alpha}|_{N_i} := f_i^* \alpha|_{U_i}$), and the vector field $\alpha^{\sharp}$ corresponds to the holonomy invariant vector field, $\bar{\alpha}^{\sharp}$ of $N$, therefore it is a~foliate vector field of $(M,{\mathcal F})$.
\end{itemize}

 Moreover, global foliate vector fields are {tangent} to the strata of $(M,g,{\mathcal F})$. More precisely, consider the stratification of $M$ by closures of leaves of $\fcal$. Let $X$ be a~foliate vector field of $\fcal$ and consider the associated global flow $\phi_X: M \times \rbb \to M$. Then $\phi_X^t: M \to M$ is a~diffeomorphism, by \ref{FoliateVFs} it maps diffeomorphically a~leaf $L$ to another leaf $L'$, and therefore is a~diffeomorphism of closures of $L$ and $L'$. In particular, each $\phi_X(t)$ preserves the stratification.
 Now if we take the flow on each stratum, then we obtain that $X$ corresponds to a~vector field tangent to the stratum. Therefore, we have the following

\begin{prop}\label{sing_fol} Let $(M,g,{\mathcal F})$ be a~Riemannian foliated manifold such that $\dim H^1(M/{\mathcal F})=k$ and the basic Albanese map is a~submersion. Then the foliation $\mathcal F$ restricted to each stratum has codimension at least $k$. In particular, if $k >0$, the foliation $\mathcal F$ has no isolated compact leaves.
\end{prop}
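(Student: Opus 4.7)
The plan is to extract $k$ pointwise linearly independent $\fcal$-orthogonal foliate vector fields from the basic $1$-forms that define $A_\fcal$, and then restrict them to an arbitrary stratum of $\bar\fcal$. Concretely, I would fix closed basic $1$-forms $\omega_1,\dots,\omega_k$ whose classes form a basis of $H^1(M/\fcal,\qbb)$ and which define $A_\fcal$. By Lemma \ref{BasicAlbProperties}, the submersion assumption on $A_\fcal$ is equivalent to $\omega_1\wedge\cdots\wedge\omega_k$ being non-singular, so $\omega_1(x),\dots,\omega_k(x)$ are linearly independent in $T_x^*M$ at every $x\in M$. Since $\sharp\colon T^*M\to TM$ is a pointwise linear isomorphism, $\omega_1^\sharp,\dots,\omega_k^\sharp$ are then pointwise linearly independent on $M$, and by the discussion immediately preceding the proposition each is an $\fcal$-orthogonal foliate vector field.

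Next, I would fix an arbitrary stratum $S$ of the stratification by leaf closures. By the observation that foliate vector fields are tangent to strata (their flows preserve leaves, hence leaf closures, hence the stratification), each $\omega_i^\sharp|_S$ takes values in $TS$. At any $x\in S$, the leaf of $\fcal$ through $x$ lies in $S$, since all its points share the same closure and hence the same stratum; consequently $T_x(\fcal|_S)=T_x\fcal$. The vectors $\omega_1^\sharp(x),\dots,\omega_k^\sharp(x)\in T_xS$ are therefore linearly independent and orthogonal to $T_x(\fcal|_S)$, forcing $\codim_S \fcal|_S\ge k$.

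For the last sentence I would argue by contradiction: if $k>0$ and $L$ is an isolated compact leaf, then $L=\bar L$ puts $L$ in the stratum $S_{\dim\fcal}$ of closed leaves, and isolation forces $L$ to be a connected component of this stratum. On that component $\fcal|_L$ consists of the single leaf $L$, hence has codimension $0$, contradicting the bound $\codim\ge k\ge 1$ just proved.

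I do not anticipate a serious obstacle: the argument is essentially bookkeeping once the two ingredients already set up before the statement are in place — the $\sharp$-correspondence between basic $1$-forms and foliate $\fcal$-orthogonal vector fields, and the tangency of foliate vector fields to the strata of $\bar\fcal$. The only point that requires a moment's care is the identification $T_x(\fcal|_S)=T_x\fcal$, which is what allows the pointwise orthogonality on $M$ to transfer to pointwise orthogonality inside $S$.
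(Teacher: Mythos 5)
Your proposal is correct and follows essentially the same route as the paper: both arguments convert the basic $1$-forms into $\fcal$-orthogonal foliate vector fields via $\sharp$, use the tangency of foliate vector fields to the strata of $\bar\fcal$, and conclude the codimension bound, with the isolated-compact-leaf case handled as a zero-codimension stratum contradiction. The only cosmetic difference is that you work directly with pointwise linear independence of $\omega_1,\dots,\omega_k$ (from the non-singularity of their wedge product), whereas the paper first passes to $\tilde g$-orthogonal representatives; your version is, if anything, slightly cleaner.
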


\begin{proof}
    By the assumption that the basic Albanese map is a~submersion, the 1-forms which are representatives of non-zero classes of $H^1(M,\fcal)$ cannot vanish at any point.
    
    It follows that we have basic $1$-forms $\alpha_1,...,\alpha_k$ which are orthogonal 
    with respect to the metric $\Tilde{g}$ induced by $g$. Then $\alpha_1^\sharp, ..., \alpha_k^\sharp$ are non-vanishing and orthogonal with respect to $g$. Moreover, they are also tangent to strata of the stratification and orthogonal to leaves of $\fcal$, therefore the codimension of $\fcal$ on each stratum is at least $k$.

    If $\mathcal F$ has an isolated compact leaf then any basic 1-form vanishes along this compact leaf, therefore the leaf itself forms a~stratum of the stratification.
\end{proof}

\bibliographystyle{amsalpha}
\bibliography{refs}

@book {Tond,
    AUTHOR = {Tondeur, Philippe},
     TITLE = {Geometry of foliations},
    SERIES = {Monographs in Mathematics},
    VOLUME = {90},
 PUBLISHER = {Birkh\"auser Verlag, Basel},
      YEAR = {1997},
     PAGES = {viii+305},
      ISBN = {3-7643-5741-X},
       DOI = {10.1007/978-3-0348-8914-8},
       URL = {https://doi.org/10.1007/978-3-0348-8914-8},
}

@book {MetrFol,
    AUTHOR = {Gromoll, Detlef and Walschap, Gerard},
     TITLE = {Metric foliations and curvature},
    SERIES = {Progress in Mathematics},
    VOLUME = {268},
 PUBLISHER = {Birkh\"auser Verlag, Basel},
      YEAR = {2009},
     PAGES = {viii+174},
      ISBN = {978-3-7643-8714-3},
       DOI = {10.1007/978-3-7643-8715-0},
       URL = {https://doi.org/10.1007/978-3-7643-8715-0},
}

@article {Nagano,
    AUTHOR = {Nagano, Tadashi and Smyth, Brian},
     TITLE = {Minimal varieties and harmonic maps in tori},
   JOURNAL = {Comment. Math. Helv.},
  FJOURNAL = {Commentarii Mathematici Helvetici},
    VOLUME = {50},
      YEAR = {1975},
     PAGES = {249--265},
      ISSN = {0010-2571,1420-8946},
       DOI = {10.1007/BF02565749},
       URL = {https://doi.org/10.1007/BF02565749},
}

@book {Molino,
    AUTHOR = {Molino, Pierre},
     TITLE = {Riemannian foliations},
    SERIES = {Progress in Mathematics},
    VOLUME = {73},
      NOTE = {Translated from the French by Grant Cairns,
              With appendices by Cairns, Y. Carri\`ere, \'E. Ghys, E. Salem
              and V. Sergiescu},
 PUBLISHER = {Birkh\"auser Boston, Inc., Boston, MA},
      YEAR = {1988},
     PAGES = {xii+339},
      ISBN = {0-8176-3370-7},
       DOI = {10.1007/978-1-4684-8670-4},
       URL = {https://doi.org/10.1007/978-1-4684-8670-4},
}

@book {Griff,
    AUTHOR = {Griffiths, Phillip and Harris, Joseph},
     TITLE = {Principles of algebraic geometry},
    SERIES = {Wiley Classics Library},
      NOTE = {Reprint of the 1978 original},
 PUBLISHER = {John Wiley \& Sons, Inc., New York},
      YEAR = {1994},
     PAGES = {xiv+813},
      ISBN = {0-471-05059-8},
       DOI = {10.1002/9781118032527},
       URL = {https://doi.org/10.1002/9781118032527},
}

@article {HRW,
    AUTHOR = {Habib, Georges and Richardson, Ken and Wolak, Robert},
     TITLE = {Transverse geometric formality},
   JOURNAL = {Math. Z.},
  FJOURNAL = {Mathematische Zeitschrift},
    VOLUME = {309},
      YEAR = {2025},
    NUMBER = {2},
     PAGES = {Paper No. 20, 32},
      ISSN = {0025-5874,1432-1823},
       DOI = {10.1007/s00209-024-03654-x},
       URL = {https://doi.org/10.1007/s00209-024-03654-x},
}

@article {CW,
    AUTHOR = {Cordero, Luis A. and Wolak, Robert A.},
     TITLE = {Examples of foliations with foliated geometric structures},
   JOURNAL = {Pacific J. Math.},
  FJOURNAL = {Pacific Journal of Mathematics},
    VOLUME = {142},
      YEAR = {1990},
    NUMBER = {2},
     PAGES = {265--276},
      ISSN = {0030-8730,1945-5844},
       URL = {http://projecteuclid.org/euclid.pjm/1102646346},
}

@article {wilk,
    AUTHOR = {Lytchak, Alexander and Wilking, Burkhard},
     TITLE = {Riemannian foliations of spheres},
   JOURNAL = {Geom. Topol.},
  FJOURNAL = {Geometry \& Topology},
    VOLUME = {20},
      YEAR = {2016},
    NUMBER = {3},
     PAGES = {1257--1274},
      ISSN = {1465-3060,1364-0380},
       DOI = {10.2140/gt.2016.20.1257},
       URL = {https://doi.org/10.2140/gt.2016.20.1257},
}

@article {Nomizu,
    AUTHOR = {Nomizu, Katsumi},
     TITLE = {On the cohomology of compact homogeneous spaces of nilpotent
              {L}ie groups},
   JOURNAL = {Ann. of Math. (2)},
  FJOURNAL = {Annals of Mathematics. Second Series},
    VOLUME = {59},
      YEAR = {1954},
     PAGES = {531--538},
      ISSN = {0003-486X},
       DOI = {10.2307/1969716},
       URL = {https://doi.org/10.2307/1969716},
}

@book {M2,
    AUTHOR = {Moerdijk, I. and Mr\v{c}un, J.},
     TITLE = {Introduction to foliations and {L}ie groupoids},
    SERIES = {Cambridge Studies in Advanced Mathematics},
    VOLUME = {91},
 PUBLISHER = {Cambridge University Press, Cambridge},
      YEAR = {2003},
     PAGES = {x+173},
      ISBN = {0-521-83197-0},
       DOI = {10.1017/CBO9780511615450},
       URL = {https://doi.org/10.1017/CBO9780511615450},
}

@article {Kot,
    AUTHOR = {Kotschick, D.},
     TITLE = {On products of harmonic forms},
   JOURNAL = {Duke Math. J.},
  FJOURNAL = {Duke Mathematical Journal},
    VOLUME = {107},
      YEAR = {2001},
    NUMBER = {3},
     PAGES = {521--531},
      ISSN = {0012-7094,1547-7398},
       DOI = {10.1215/S0012-7094-01-10734-5},
       URL = {https://doi.org/10.1215/S0012-7094-01-10734-5},
}

@article {NV,
    AUTHOR = {Nagy, Paul-Andi and Vernicos, Constantin},
     TITLE = {The length of harmonic forms on a compact {R}iemannian
              manifold},
   JOURNAL = {Trans. Amer. Math. Soc.},
  FJOURNAL = {Transactions of the American Mathematical Society},
    VOLUME = {356},
      YEAR = {2004},
    NUMBER = {6},
     PAGES = {2501--2513},
      ISSN = {0002-9947,1088-6850},
       DOI = {10.1090/S0002-9947-04-03546-9},
       URL = {https://doi.org/10.1090/S0002-9947-04-03546-9},
}

@article {Kordyukov,
    AUTHOR = {Kordyukov, Yuri and Lejmi, Mehdi and Weber, Patrick},
     TITLE = {Seiberg-{W}itten invariants on manifolds with {R}iemannian
              foliations of codimension 4},
   JOURNAL = {J. Geom. Phys.},
  FJOURNAL = {Journal of Geometry and Physics},
    VOLUME = {107},
      YEAR = {2016},
     PAGES = {114--135},
      ISSN = {0393-0440,1879-1662},
       DOI = {10.1016/j.geomphys.2016.05.012},
       URL = {https://doi.org/10.1016/j.geomphys.2016.05.012},
}

@article{trans,
    AUTHOR = {Wolak, Robert A.},
     TITLE = {Foliated and associated geometric structures on foliated
              manifolds},
   JOURNAL = {Ann. Fac. Sci. Toulouse Math. (5)},
  FJOURNAL = {Toulouse. Facult\'e{} des Sciences. Annales. Math\'ematiques.
              S\'erie 5},
    VOLUME = {10},
      YEAR = {1989},
    NUMBER = {3},
     PAGES = {337--360},
      ISSN = {0240-2955},
       URL = {http://www.numdam.org/item?id=AFST_1989_5_10_3_337_0},
}

@article {malcev,
    AUTHOR = {Malcev, A. I.},
     TITLE = {On a class of homogeneous spaces},
   JOURNAL = {Amer. Math. Soc. Translation},
  FJOURNAL = {Amer. Math. Soc. Translation},
    VOLUME = {1951},
      YEAR = {1951},
    NUMBER = {39},
     PAGES = {33},
}

@book {Michor,
    AUTHOR = {Kol\'ar, Ivan and Michor, Peter W. and Slov\'ak, Jan},
     TITLE = {Natural operations in differential geometry},
 PUBLISHER = {Springer-Verlag, Berlin},
      YEAR = {1993},
     PAGES = {vi+434},
      ISBN = {3-540-56235-4},
       DOI = {10.1007/978-3-662-02950-3},
       URL = {https://doi.org/10.1007/978-3-662-02950-3},
}

@article {TischGen,
    AUTHOR = {Cardona, Robert and Miranda, Eva},
     TITLE = {Integrable systems and closed one forms},
   JOURNAL = {J. Geom. Phys.},
  FJOURNAL = {Journal of Geometry and Physics},
    VOLUME = {131},
      YEAR = {2018},
     PAGES = {204--209},
      ISSN = {0393-0440,1879-1662},
       DOI = {10.1016/j.geomphys.2018.05.013},
       URL = {https://doi.org/10.1016/j.geomphys.2018.05.013},
}

@article {Tisch,
    AUTHOR = {Tischler, D.},
     TITLE = {On fibering certain foliated manifolds over {$S\sp{1}$}},
   JOURNAL = {Topology},
  FJOURNAL = {Topology. An International Journal of Mathematics},
    VOLUME = {9},
      YEAR = {1970},
     PAGES = {153--154},
      ISSN = {0040-9383},
       DOI = {10.1016/0040-9383(70)90037-6},
       URL = {https://doi.org/10.1016/0040-9383(70)90037-6},
}

@article{HC,
     author = {Kacimi-Alaoui, Aziz El and Hector, Gilbert},
     title = {D\'ecomposition de {Hodge} basique pour un feuilletage riemannien},
     journal = {Annales de l'Institut Fourier},
     pages = {207--227},
     year = {1986},
     publisher = {Institut Fourier},
     address = {Grenoble},
     volume = {36},
     number = {3},
     doi = {10.5802/aif.1066},
}

\end{document}